\documentclass[11pt, letterpaper]{article}
\usepackage[margin = 1in]{geometry}
\usepackage[utf8]{inputenc}
\usepackage{amsmath}
\usepackage{amssymb}
\usepackage{booktabs}
\usepackage{mathrsfs}
\usepackage{url}
\usepackage{bm}
\usepackage[
  separate-uncertainty = true,
  multi-part-units = repeat
]{siunitx}
\usepackage{mathtools}
\def\multiset#1#2{\ensuremath{\left(\kern-.3em\left(\genfrac{}{}{0pt}{}{#1}{#2}\right)\kern-.3em\right)}}
\usepackage{tikz}
\usepackage{youngtab}

\usepackage{amsthm}
\usepackage{enumerate}
\usepackage{microtype}
\usepackage{tikz}
\usepackage{thmtools,thm-restate}


\newcommand{\suchthat}{\;\ifnum\currentgrouptype=16 \middle\fi|\;}

\newcommand{\simpc}{\Delta_{n,\ell}}

\newcommand{\inlatA}{\mathcal{L}(\mathcal{A})}
\renewcommand{\a}{\alpha}

\renewcommand{\l}{\lambda}
\newtheorem{theorem}{Theorem}[section]
\newtheorem{lemma}[theorem]{Lemma}
\newtheorem{proposition}[theorem]{Proposition}
\newtheorem{corollary}[theorem]{Corollary}
\newtheorem{conjecture}{Conjecture}
\theoremstyle{definition}

\theoremstyle{definition}
\newtheorem*{example}{Example}

\newcommand{\caseA}{{\noindent \bf Case $\bm{s=1}$.~}}
\newcommand{\caseB}{{\noindent \bf Case $\bm{s=2}$.~}}
\newcommand{\caseC}{{\noindent \bf Case $\bm{s=3}$.~}}

\title{Classifications of $\ell$-Zero-Sumfree Sets}
\author{Ashleigh Adams, Carole Hall, Eric Stucky}
\begin{document}
\maketitle

\begin{abstract}
    The set of all $\ell$-zero-sumfree subsets of $\mathbb{Z}/n\mathbb{Z}$ is a simplicial complex denoted by $\simpc$. We create an algorithm via defining a set of integer partitions we call $(n,\ell)$-congruent partitions in order to compute this complex for moderately-sized parameters $n$ and $\ell$. We also theoretically determine $\simpc$ for several infinite families of parameters, and compute the intersection posets and the characteristic polynomials of the corresponding coordinate subspace arrangements.
\end{abstract}

\section{Introduction}
In 1988, Cameron and Erd\H{o}{s} \cite{CameronErdos} conjectured that the maximum size of a sum-free subset $S\subseteq\left\{1,2,...,n\right\}$ is $O(2^{\frac{n}{2}})$, where a set is ``sum-free'' if no two distinct elements in $S$ add to another element in $S$. This conjecture has since been upgraded to a theorem by \cite{Green} and \cite{Sapozhenko}, and has inspired a great deal of work on similar objects and generalizations. For instance, Calkin and Thomson \cite{CalkinThomson} define the notion of a $(k,\ell)$-sumfree set, which is a set $S\subseteq\mathbb{N}$ such that the equation
$$x_1+x_2+\cdots+ x_k = y_1+y_2+\cdots+y_\ell$$
has no solutions with all $x_i,y_i\in S$.  Notice that when $k=2$ and $\ell=1$, this agrees with the classical definition of a sum-free set. These sets and their close variants have also been studied by many authors; see e.g. \cite{BierChin} for an early study of $(k,\ell)$-sumfree sets in cyclic groups,
and \cite{menu-research-problems} for a detailed survey of recent directions.

In this paper we consider the case of $(k,\ell)$-sumfree sets in $\mathbb{Z}/n\mathbb{Z}$ where $k=0$, which we call \textbf{$\bm{\ell}$-zero-sumfree sets}. We denote the collection of all such sets by $\simpc$; that is,
$$\simpc \coloneqq \Big\{\{s_1,\dots, s_j\}\subset{\mathbb{Z}/n\mathbb{Z}}:c_1s_1 + \cdots + c_js_j\neq 0\text{ whenever } \sum_{i=1}^{j}c_i = \ell\Big\}.$$
This case appears to have qualitatively different behavior than when both $k$ and $\ell$ are strictly positive. For instance, Bajnok and Matzke \cite{BajnokMatzke}, following on the work of \cite{HamidounePlagne}, recently determined an explicit formula for the maximum size of a $(k,\ell)$-sumfree set in $\mathbb{Z}/n\mathbb{Z}$ for any $k,\ell>0$; however, their methods did not directly extend to the $k=0$ case (but see upcoming work of Bajnok, Matzke, and the first author).

In contrast to much of the literature, which focuses on extremal properties of $\Delta_{n,\ell}$, we are concerned with understanding its global structure. To this end, it is extremely useful to have a complete classification of all $\ell$-zero-sumfree sets. In \textbf{Theorem \ref{algorithm}}, we describe an algorithm that we used to compute $\simpc$ for small $n$ and $\ell$, and to observe its properties. The main result of this paper is a complete description of $\simpc$ for three types of parameters: 

\begin{enumerate}
    \item a ``doubling'' class with parameters $n=2^{m+1}\rho$ and $\ell = 2^m\rho$, for odd $\rho\geq{3}$,
    \item a ``prime powers'' class with parameters $n = p^e$ and $\ell = p^e-1$ for prime $p\geq{2}$ and exponent $e\geq{1}$, and 
    \item three ``arms and legs'' classes with parameters $n = 2p$, $\ell = 2p-s$ for prime $p\geq{3}$ and $s\in\left\{1,2,3\right\}$.
\end{enumerate}

For each of these classes, we also use these descriptions to compute some combinatorial and geometric invariants. For instance, to any $\Delta_{n,\ell}$ there is a naturally associated collection $\mathcal{A}$, which consists of finitely many subspaces of a vector space. For the classes described above, we compute this collection, its intersection poset, and its characteristic polynomial.

\section{Simplicial Complexes}

Given a set $V$, a \textbf{simplicial complex} (or just a \textbf{complex}) on $V$ is a nonempty collection $\Gamma$ of subsets which is closed under inclusion; that is, if $S\in\Gamma$ and $T\subseteq S$, then $T\in\Gamma$ as well. Elements of simplicial complexes are called \textbf{faces}, and faces which are maximal under inclusion are called \textbf{facets}. For any face $S\in\Gamma$, its \textbf{dimension} is defined to be $\dim(S)=|S|-1$, and then the \textbf{dimension} of the complex itself is defined to be $\dim(\Gamma) = \displaystyle\max_{S\in\Gamma} \dim(S)$.

Notice that simplicial complexes are entirely determined by their (nonempty) set of facets. Therefore, given any collection $\mathcal F$ of subsets of $V$, we write $\langle\mathcal F\rangle$ to mean the complex $\left\{S\in 2^V: S\subseteq F \text{ for some } F\in \mathcal F\right\}$. If no two elements of $\mathcal F$ contain each other, then $\mathcal F$ is precisely the collection of facets in $\langle\mathcal F\rangle$.

We observe that for any $k,\ell\geq 0$, the collection of $(k,\ell)$-sumfree sets in any abelian group $G$ is a simplicial complex on $G$. This is because if there are no solutions to $x_1+\cdots+x_k=y_1+\cdots y_\ell$ with all $x_i,y_j$ in $S$, then the same is clearly true for any subset of $S$. The structures of these complexes are not well-understood; indeed, most prior research has been devoted merely to computing their dimensions.

In particular, this means that $\Delta_{n,\ell}$ is a simplicial complex for any $0<\ell<{n}$. In order to understand this class of complexes, we developed an algorithm for computing small examples. Key to this algorithm is the notion of the \textbf{Alexander dual}: $\Gamma^\vee = 2^V\setminus \left\{S\in 2^V: S\notin \Gamma \right\}$. Notice that $\Gamma^\vee$ is a complex if and only if $\Gamma$ is a complex, and $(\Gamma^\vee)^\vee=\Gamma$. We also recall that a \textbf{partition} $\lambda$ of $N$ into $\ell$ \textbf{parts} is a non-increasing ordered list of non-negative integers $(\lambda_1,\dots, \lambda_\ell)$ which sum to $N$ (note that an ``ordered list'' allows for repetition of elements).

\begin{theorem}
\label{algorithm}
The following procedure computes $\simpc$, the simplicial complex of of all $\ell$-zero-sumfree subsets of $\mathbb{Z}/n\mathbb{Z}$:
\begin{enumerate}
    \item For each $0\leq m\leq n-1$, generate the partitions of $mn$ into $\ell$ parts which also satisfy $\lambda_1\leq n-1$ and consider their underlying sets. That is, create the set (without multiplicity) $\sigma=\left\{\lambda_1,\dots,\lambda_h\right\}$ with $h\leq{\ell}$ and $\lambda_1\geq\lambda_2\geq\cdots\geq\lambda_h$ from each such partition $\lambda$. Store these $\sigma$ into a set called NLC (short for ``$(n,
    \ell)$-congruent'' partitions).
    \item Take the complement of every element in $NLC$; denote the set of these complements by $\text{NLC}_c$.
    \item Take the Alexander dual of $\langle\text{NLC}_c\rangle$ to obtain $\simpc$: $\langle\text{NLC}_c\rangle^{\lor} = \simpc$.
\end{enumerate}
\end{theorem}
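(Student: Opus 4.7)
My plan is to verify each step of the procedure in turn, grounding the argument in the combinatorial bijection between $\ell$-zero-sum relations in $\mathbb{Z}/n\mathbb{Z}$ and $(n,\ell)$-congruent partitions.

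First, I would establish the core identification behind Step 1: for any nonempty $\sigma \subseteq \{1, \dots, n-1\}$, the set $\sigma$ lies in $\mathrm{NLC}$ if and only if there exist positive integer coefficients $c_s$ indexed by $s \in \sigma$ with $\sum_{s} c_s \leq \ell$ and $\sum_{s} c_s \cdot s \equiv 0 \pmod{n}$. The forward direction reads off the $c_s$ as the multiplicities of parts in the partition $\lambda$; conversely, re-sorting any such coefficient vector into a weakly decreasing sequence produces a partition of $\sum_s c_s s = mn$ satisfying all the stated bounds, since $\lambda_1 \leq n-1$ is automatic from $\sigma \subseteq \{1, \dots, n-1\}$, and $m \leq \ell(n-1)/n < \ell \leq n-1$. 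Each $\sigma \in \mathrm{NLC}$ thereby encodes a zero-sum relation of length at most $\ell$ supported on $\sigma$, which can be padded to length exactly $\ell$ by adjoining the appropriate number of copies of $0$.

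For Step 2, I would use this identification to describe the non-faces of $\simpc$: a set $S \subseteq \mathbb{Z}/n\mathbb{Z}$ fails to be $\ell$-zero-sumfree precisely when it contains some $\sigma \in \mathrm{NLC}$, together with $0 \in S$ whenever the corresponding partition has strictly fewer than $\ell$ parts. Complementing each such non-face in $V = \mathbb{Z}/n\mathbb{Z}$ then produces a face of the Alexander dual $\simpc^\vee$, and the collection obtained this way coincides with $\langle\mathrm{NLC}\rangle_c$. Step 3 follows immediately from the involutivity $(\Gamma^\vee)^\vee = \Gamma$ of the Alexander dual, giving $\langle\mathrm{NLC}\rangle_c^{\vee} = \simpc^{\vee\vee} = \simpc$.

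The main obstacle lies in Step 2 and stems from the ``at most $\ell$'' relaxation: partitions with strictly fewer than $\ell$ positive parts correspond to zero-sum multisets padded with zeros and therefore only flag non-faces of $\simpc$ that already contain the element $0$. I would resolve this by working throughout on the full ground set $V = \mathbb{Z}/n\mathbb{Z}$, so that the automatic minimal non-face $\{0\}$ absorbs these ``padded'' obstructions, and then verifying at the level of facets that the complementation step correctly converts $\langle\mathrm{NLC}\rangle$ into $\simpc^\vee$.
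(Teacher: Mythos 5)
Your outline is the same as the paper's: show that $\langle\text{NLC}\rangle$ is exactly the collection of non-faces of $\simpc$, note that complementing a family of non-faces in $V$ yields the Alexander dual, and finish with $(\Gamma^\vee)^\vee=\Gamma$. Your Steps 2--3 bookkeeping agrees with the paper's, and your explicit dictionary between elements of NLC and coefficient vectors $(c_s)$ with $\sum_s c_s\le\ell$ and $\sum_s c_s\cdot s\equiv 0\pmod n$ is a genuine improvement on the paper, which disposes of this identification in a single ``by definition'' sentence.

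However, the difficulty you flag at the end is a real gap, and your proposed resolution does not close it. A partition of $mn$ with $j<\ell$ parts certifies only a zero-sum multiset of size $j$; it witnesses that $S$ is a non-face only if the multiset can be padded to length exactly $\ell$, which requires $0\in S$. But Step 1 stores the bare underlying set $\sigma$, not $\sigma\cup\{0\}$, so $\langle\text{NLC}\rangle$ can strictly contain the set of non-faces. Concretely, take $n=6$, $\ell=3$: the partition $(3,3)$ of $6$ has two parts, $\lambda_1=3\le 5$, and underlying set $\{3\}$, so the procedure places $\{3\}$ in NLC; yet $3+3+3=9\equiv 3\not\equiv 0\pmod 6$, so $\{3\}$ is a face of $\Delta_{6,3}$ --- indeed a vertex of the triangle $\{1,3,5\}$ of Theorem \ref{total-doubling}. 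Declaring $\{0\}$ a minimal non-face does not ``absorb'' this example, since $\{3\}$ does not contain $0$; the identity $\langle\text{NLC}\rangle=2^V\setminus\simpc$, on which both your Step 2 and the paper's proof rest, simply fails for NLC as generated. The repair must change what is stored: either restrict Step 1 to partitions with \emph{exactly} $\ell$ parts and insert $\{0\}$ into NLC separately, or store $\sigma\cup\{0\}$ whenever the partition has fewer than $\ell$ parts. With that amendment, your bijection and the duality argument go through. (This is a defect of the algorithm as stated, not only of your write-up; the paper's own proof asserts the key identity without confronting it.)
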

\begin{proof}
Define the set $[\text{NLC}]$ to be the collection of all subsets of our vertex set $V$ such that the subset contains an NLC partition. This is, therefore, the collection of all subsets of $V$ that are \textit{not} $\ell$-zero-sumfree: $[\text{NLC}] = 2^V\setminus{\simpc} = \simpc^c.$

Therefore, we obtain the desired equality in Step 3 as follows:
$$\langle\text{NLC}_c\rangle^{\lor} = \left\{\sigma: \sigma\subseteq\tau \text{ for some }  (2^V\setminus\tau)\in{\text{NLC}}\right\}^{\lor} = \left\{\sigma: (2^V\setminus\sigma)\in{[\text{NLC}]}\right\}^{\lor}.$$
This is, by definition $(\simpc^{\lor})^{\lor}$, which as noted above is $\simpc$.
\end{proof}

\subsection{Disjoint Unions of Simplices}

A \textbf{d-simplex} is the simplicial complex $\Gamma=2^V$ on some set $V$ having $d+1$ elements. Note that the indexing is chosen this way so that a $d$-simplex has dimension $d$. Some of the $\simpc$ that we consider are built from simplices in a particularly simple way: for any two simplicial complexes $\Gamma_1$ and $\Gamma_2$ on disjoint vertex sets $V_1$ and $V_2$, the \textbf{disjoint union}  $\Gamma_1\cup\Gamma_2$ is a simplicial complex on $V_1\cup V_2$. Because we have not found this information elsewhere in the literature, we would like to state some elementary facts about disjoint unions of simplices. 

For any simplicial complex $\Gamma$ with dimension $d$, its \textbf{f-vector} $f(\Gamma)$ is the list of numbers $(f_{-1}, f_0,\dots, f_d)$, where each $f_k$ is the number of faces in $\Gamma$ having dimension $k$. This information is also encoded in its \textbf{h-vector} $h(\Gamma)=(h_0,h_1,\dots, h_{d+1})$, where
$$h_k = \sum_{i=0}^k (-1)^{k-i}\binom{d+1-i}{d+1-k} f_{i-1}.$$

Notice that any simplicial complex contains $\varnothing$, the unique face of dimension $-1$, and so $f_{-1}=h_0=1$.

\begin{proposition} \label{f-vec-distinct-simplices}
If $\Gamma=\Delta_{1}\cup\ldots\cup\Delta_{\a}$ where each $\Delta_{i}$ is a $d_i$-simplex, then the $f$-vector of $\Gamma$ is given by $f_{-1}=1$ and $f_{k-1}=\sum_{i=1}^{\a}\binom{d_i+1}{k}$ for all $1\leq k\leq d$.
\end{proposition}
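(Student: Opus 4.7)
The plan is to count $(k-1)$-dimensional faces directly, exploiting the fact that the vertex sets $V_1,\ldots,V_\alpha$ are pairwise disjoint. The key preliminary observation is that a subset $S \subseteq V_1 \cup \cdots \cup V_\alpha$ is a face of $\Gamma$ if and only if $S$ is a face of some $\Delta_i$; this is immediate from the definition of the union of simplicial complexes on disjoint vertex sets. Since $\Delta_i = 2^{V_i}$, the faces of $\Delta_i$ are exactly the subsets of $V_i$. So $S \in \Gamma$ if and only if $S \subseteq V_i$ for some $i$.

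Next I would handle the $k \geq 1$ case. Any face $S$ of $\Gamma$ with $|S| = k \geq 1$ must lie in some $V_i$, and because the $V_i$ are pairwise disjoint and $S$ is nonempty, this $i$ is unique. Hence the collection of $k$-element faces of $\Gamma$ partitions according to which $V_i$ contains them, giving
\[ f_{k-1} \;=\; \sum_{i=1}^{\alpha} \binom{|V_i|}{k} \;=\; \sum_{i=1}^{\alpha}\binom{d_i+1}{k}, \]
since $|V_i| = d_i + 1$ by the definition of a $d_i$-simplex. The $k = 0$ case is addressed separately: $\emptyset$ is the unique face of dimension $-1$ in any nonempty complex, so $f_{-1} = 1$. (This is also why the sum formula is stated only for $k \geq 1$: $\emptyset$ belongs to every $\Delta_i$ and would otherwise be over-counted $\alpha$ times.)

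I do not expect a substantive obstacle here; the argument is essentially bookkeeping once one recognizes that the face poset of $\Gamma$ decomposes, above dimension $-1$, as a disjoint union of the face posets of the individual $\Delta_i$. The only mild subtlety is the treatment of $\emptyset$, which must be split off from the uniform binomial count.
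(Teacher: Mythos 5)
Your proof is correct and follows essentially the same route as the paper's: the paper likewise observes that $f_k$ is additive over disjoint unions for $k\geq 0$ (with $f_{-1}$ handled separately because $\varnothing$ is shared), and your argument simply makes that additivity explicit by noting each nonempty face lies in a unique $V_i$. No substantive difference.
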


\begin{proof}
Let $\Gamma$ be a simplicial complex of dimension $d$ such that it is comprised of only $\a$ many $d_i$-simplices, not necessarily distinct. By definition of the binomial coefficient, a $d$-simplex has the $f$-vector given by $f_{k-1}=\binom{d+1}{k}$. One easily checks that the $f$-vector is nearly additive with respect to disjoint unions: $f_k(\Gamma_1\cup\Gamma_2) =f_k(\Gamma_1)+f_k(\Gamma_2)$ for all $k\geq 0$, from which the proposition follows.
\end{proof}


We now turn our attention to the $h$-vector. The non-additivity of $f_{-1}$ for disjoint unions causes more serious difficulties for the $h$-vector, since the defining sum for $h_k$ contains a term with $f_{-1}$ for any $k$. However, there is a surprisingly pleasing formula for the $h$-vector as well.

Given a partition $\l=(\l_1,\dots, \l_\a)$, we may draw its \textbf{Young diagram} (in French notation), a left-and-bottom-justified array of boxes with $\l_i$ boxes in row $i$.
For example, if $\l=(5,4,1,1)$, then its Young diagram is
\begin{center}
$\yng(1,1,4,5)$
\end{center}

Since $\l$ is a decreasing list, reflection across the line $y=x$ gives rise to another Young diagram, called the \textbf{conjugate partition} $\mu$ of $\lambda$. For instance, relfecting the $\lambda$ above yields $\mu=(4,2,2,2,1)$:

\begin{center}
\yng(1,2,2,2,4)
\end{center}

In the proof of the following proposition, it will be helpful to observe the formal definition of the conjugate partition $\mu=(\mu_1,\dots,\mu_\ell)$:
$$ \mu_m = \#\{j: \l_j\geq m \}, \qquad 1\leq m\leq\l_1=:\ell. $$
It will also be convenient to write $[N]$ as shorthand for the set $\{1,2,\dots, N\}$.

\begin{proposition}  \label{h-vec-distinct-simplices}
Let $\Gamma_\lambda$ be a disjoint union of simplices $\Delta_{1}\cup\cdots\cup\Delta_{\a}$ where $\l$ is a partition and each $\Delta_i$ is a $(\l_i-1)$-simplex. Then $h_0=1$, and for all $1\leq k\leq \l_1$
$$(-1)^{k-1} h_k = \sum_{m=1}^{\ell-k+1}\binom{\ell-m}{k-1}(\mu_m-1),$$
where $\mu$ is the conjugate partition of $\l.$
\end{proposition}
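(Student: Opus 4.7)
The plan is to compute the $h$-polynomial $h(\Gamma_\lambda, t) := \sum_k h_k t^k$ in closed form and then extract the coefficient of $t^k$. Starting from the standard generating-function identity $\sum_k h_k t^k = \sum_{i=0}^{\ell} f_{i-1} t^i (1-t)^{\ell - i}$ (where $\ell = \lambda_1 = d+1$, derived by reindexing the definition of $h_k$), I would substitute the $f$-vector supplied by Proposition \ref{f-vec-distinct-simplices}. Splitting off the $f_{-1}=1$ contribution and using $f_{i-1} = \sum_{j=1}^\alpha \binom{\lambda_j}{i}$ for $i\ge 1$ gives
$$h(\Gamma_\lambda, t) = (1-t)^\ell + \sum_{j=1}^\alpha \sum_{i=1}^{\lambda_j} \binom{\lambda_j}{i} t^i (1-t)^{\ell-i}.$$

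For each $j$, factoring $(1-t)^{\ell - \lambda_j}$ out of the inner sum reveals a nearly complete binomial expansion: what remains equals $\bigl((1-t)+t\bigr)^{\lambda_j} - (1-t)^{\lambda_j} = 1 - (1-t)^{\lambda_j}$. This collapses the whole expression to the very clean form
$$h(\Gamma_\lambda, t) = (1-\alpha)(1-t)^\ell + \sum_{j=1}^\alpha (1-t)^{\ell - \lambda_j},$$
from which taking $[t^k]$ for $k \ge 1$ yields $(-1)^k h_k = (1-\alpha)\binom{\ell}{k} + \sum_{j=1}^\alpha \binom{\ell - \lambda_j}{k}$.

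The last step is to convert this to a statement about the conjugate partition. I would apply the hockey-stick identity $\binom{N}{k} = \sum_{r=0}^{N-1}\binom{r}{k-1}$ and reindex by $m = \ell - r$ to rewrite $\binom{\ell - \lambda_j}{k} = \sum_{m=\lambda_j+1}^{\ell} \binom{\ell-m}{k-1}$ and likewise $\binom{\ell}{k} = \sum_{m=1}^\ell \binom{\ell-m}{k-1}$. Swapping summation orders, the resulting coefficient of $\binom{\ell-m}{k-1}$ is $\#\{j : \lambda_j < m\} + (1-\alpha)$; applying the formal definition of $\mu$ to substitute $\#\{j : \lambda_j < m\} = \alpha - \mu_m$ simplifies this to $1-\mu_m$, producing $(-1)^{k-1} h_k = \sum_{m=1}^{\ell} \binom{\ell-m}{k-1}(\mu_m - 1)$. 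The upper limit may then be truncated to $\ell-k+1$ since $\binom{\ell-m}{k-1}$ vanishes for $m > \ell-k+1$, matching the claimed formula. The only mildly delicate part is keeping the boundary terms ($i=0$ versus $i\ge 1$, and $m=0$ versus $m\ge 1$) consistent during the reindexing; once the binomial theorem collapses the $h$-polynomial, the rest is routine bookkeeping.
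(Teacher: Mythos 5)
Your proof is correct, and it takes a genuinely different route from the paper's in its first half. Where the paper works coefficientwise from the definition of $h_k$ and invokes a separate combinatorial identity, $\sum_{i=0}^k(-1)^i\binom{a+b-i}{k-i}\binom{b}{i}=\binom{a}{k}$, proved by a sign-reversing involution (Lemma \ref{combinatorial-technicality}), you package everything into the $h$-polynomial $\sum_i f_{i-1}t^i(1-t)^{\ell-i}$ and let the binomial theorem do the collapsing, arriving at the clean closed form $(1-\alpha)(1-t)^\ell+\sum_j(1-t)^{\ell-\lambda_j}$. Both arguments land on the same intermediate identity $(-1)^kh_k=(1-\alpha)\binom{\ell}{k}+\sum_j\binom{\ell-\lambda_j}{k}$ (the paper's version uses $\lambda_1=\ell$ to drop the $j=1$ term), so from that point on the two proofs are algebraically parallel: your hockey-stick reindexing $\binom{\ell-\lambda_j}{k}=\sum_{m=\lambda_j+1}^{\ell}\binom{\ell-m}{k-1}$ followed by the count $\#\{j:\lambda_j<m\}=\alpha-\mu_m$ is exactly the paper's ``split $k$-subsets of $[\lambda_1]$ by their minimum element'' argument in generating-identity clothing. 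What your approach buys is economy --- it eliminates the auxiliary lemma and its involution entirely, and the closed form for $h(\Gamma_\lambda,t)$ is a reusable statement in its own right (it instantly gives Corollary \ref{h-vec-same-simplices}, for example); what the paper's approach buys is a bijective interpretation of each term $\binom{\ell-m}{k-1}(\mu_m-1)$ as a count of pairs (a $k$-subset with minimum $m$, an index $j\geq 2$ with $\lambda_j\geq m$), which is the kind of structure one would want when attacking Conjecture \ref{distinct-simplices-unimodal}. Your boundary bookkeeping (the $i=0$ term, the truncation of the sum at $m=\ell-k+1$) is handled correctly.
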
 

\begin{proof}
Beginning with the definition, we apply \textbf{Proposition \ref{f-vec-distinct-simplices}}, separating out the exceptional $f_{-1}$ term from the others and then swapping the order of summation.
\begin{align*}
h_k &= \sum_{i=0}^k (-1)^{k-i}\binom{\l_1-i}{k-i} f_{i-1} \\
 &= (-1)^k\binom{\l_1}{k} + \sum_{i=1}^k (-1)^{k-i}\binom{\l_1-i}{k-i} \sum_{j=1}^{\a} \binom{\l_j}{i} \\
 &= (-1)^k\binom{\l_1}{k} + \sum_{j=1}^\a  \sum_{i=1}^{k} (-1)^{k-i}\binom{\l_1-i}{k-i}\binom{\l_j}{i}.
\end{align*}
At this point, we observe the following combinatorial fact, which we will prove later.

\begin{lemma} \label{combinatorial-technicality}
Let $a, b,$ and $k$ be non-negative integers. Then
$$ \sum_{i=0}^k (-1)^i \binom{a+b-i}{k-i}\binom{b}{i} = \binom{a}{k}. $$
\end{lemma} 

We apply this fact with $a=\l_1-\l_j$ and $b=\l_j$. When plugging in to the formula for $h_k$ above, note that we need to be careful with the $i=0$ term:
$$ h_k = (-1)^k\binom{\l_1}{k} + \sum_{j=1}^r (-1)^k\left[\binom{\l_1-\l_j}{k}-\binom{\l_1}{k} \right]. $$
Observe that when $k>0$, the $j=1$ term contributes $\binom{\l_1-\l_1}{k}-\binom{\l_1}{k} = -\binom{\l_1}{k}$. Therefore, we can cancel it with the exceptional term and start the sum at $j=2$:
\begin{align*}
h_k &= (-1)^k\sum_{j=2}^r \left[\binom{\l_1-\l_j}{k}-\binom{\l_1}{k} \right].
\end{align*}
We divide both sides by $(-1)^{k+1}$ and interpret this sum as follows:
$$  (-1)^{k+1}h_k = \sum_{j=2}^r \left[ \sum_{\substack{ S\subseteq [\l_1] \\ |S|=k}} 1 - \sum_{\substack{ S\subseteq [\l_1]\smallsetminus[\l_j] \\ |S|=k}} 1 \right]. $$
In other words, each subset $S\subseteq[\l_1]$ contributes to the inner sum precisely if it is not a subset of $\{\l_j+1,\dots, \l_1\}$. This means that $S$ contributes to the $j$ term of the outer sum precisely when its smallest element is at least $\l_j$. In other words, by swapping the order of summation, we obtain
$$ (-1)^{k+1} h_k =  \sum_{\substack{ S\subset [\l_1] \\ |S|=k}} \#\{j\geq 2: \l_j\geq \min(S) \}. $$

Now split this sum into parts by tracking the minimum element of each set (which exists because $|S|=k>0$). Once the minimum element of a $k$-element set $S$ is known to be $m$, the other elements may form any $(k-1)$-element subset of $\{m+1,\dots, \l_1\}$. Therefore:
\begin{align*}
 (-1)^{k+1} h_k &= \sum_{m=1}^{\l_1}\left[\sum_{\substack{ S\subset \{m+1,\dots,\l_1\} \\ |S|=k}} \#\{j\geq 2: \l_j\geq m \}\right] \\
 &= \sum_{m=1}^{\l_1}\binom{\l_1-m}{k-1} \#\{j\geq 2: \l_j\geq m \}.
\end{align*}
This is the desired identity, slightly disguised. First, the binomial coefficient ensures the terms vanish when $m\geq \l_1-k+1$, so we can match the upper limit of the sum. Finally, $\#\{j\geq 2: \l_j\geq m \}=\mu_m-1$, by definition of the conjugate partition and because $\l_1\geq m$ for all $m$ in the summation.
\end{proof}

For completeness, we now prove \textbf{Lemma \ref{combinatorial-technicality}}.

\begin{proof}[Proof (of Lemma \ref{combinatorial-technicality})]

The unsigned version of the left-hand side
$$ \sum_{i=0}^k  \binom{a+b-i}{k-i}\binom{b}{i}$$
counts the number of ways to choose $k$ elements from $[a+b]$ in two phrases: choosing first $i$ elements from $[a]$, and choosing second $k-i$ more (distinct) elements from $[a+b]$. There is a sign-reversing involution on such two-phase sets, given by swapping the phase in which the smallest element was chosen. This cancels the contribution of all subsets except for those with no elements in $[a]$, since it is impossible to choose any of their elements in the first phase. Therefore, each of these $\binom{a}{k}$ sets contributes exactly once to the sum, which gives the desired identity.
\end{proof}

\begin{corollary}  \label{h-vec-same-simplices}
If $\Gamma=\Delta_{1}\cup\ldots\cup\Delta_{\a}$ where each $\Delta_{i}$ is a $d$-simplex, then the $h$-vector of $\Gamma$ is given by $h_0=1$ and $h_k=(-1)^{k+1}(\a-1)\binom{d+1}{k}$ for all $1\leq k\leq d$.
\end{corollary}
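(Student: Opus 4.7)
The plan is to simply specialize Proposition \ref{h-vec-distinct-simplices} to this uniform setting and then collapse the resulting sum via the hockey stick identity.

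First, I would identify the partition $\lambda$ involved. Since each $\Delta_i$ is a $d$-simplex, we have $\lambda_i = d+1$ for every $1\leq i\leq \alpha$, so $\lambda = (d+1,\dots, d+1)$ (with $\alpha$ equal parts) and hence $\ell = \lambda_1 = d+1$. The conjugate partition is therefore $\mu = (\alpha,\dots,\alpha)$ with $d+1$ parts, so $\mu_m = \alpha$ for each $1\leq m\leq d+1$. Substituting these values into Proposition \ref{h-vec-distinct-simplices} yields
$$(-1)^{k-1} h_k = (\alpha-1)\sum_{m=1}^{d-k+2} \binom{d+1-m}{k-1}.$$

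Next, I would re-index this sum by setting $j = d+1-m$, which converts it into $\sum_{j=k-1}^{d}\binom{j}{k-1}$. By the hockey stick identity, this equals $\binom{d+1}{k}$. Multiplying through by $(-1)^{k-1} = (-1)^{k+1}$ then gives $h_k = (-1)^{k+1}(\alpha-1)\binom{d+1}{k}$, as required. The fact that $h_0 = 1$ is immediate from the general observation (noted in the paragraph preceding Proposition \ref{f-vec-distinct-simplices}) that every simplicial complex has $f_{-1} = h_0 = 1$.

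I do not anticipate any serious obstacle: the proof is essentially a one-line reduction to the hockey stick identity, and the only mildly delicate step is getting the summation bounds right when re-indexing. As a sanity check, one can verify the $\alpha = 1$ case (a single $d$-simplex), where the formula correctly yields $h_k = 0$ for $k\geq 1$, matching the well-known $h$-vector of a simplex.
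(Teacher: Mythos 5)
Your proposal is correct and matches the paper's intent: the corollary is stated without a separate proof precisely because it is the specialization of Proposition \ref{h-vec-distinct-simplices} to $\lambda=(d+1,\dots,d+1)$, and your reduction via $\mu_m=\alpha$ and the hockey stick identity is the routine computation being left to the reader. The index bookkeeping and the $\alpha=1$ sanity check are both right.
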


We were led to \textbf{Proposition \ref{h-vec-distinct-simplices}} by observing two qualitative features of \textbf{Corollary \ref{h-vec-same-simplices}} which we tried to generalize. The first of these is that, $h_0$ notwithstanding, the signs in the $h$-vector alternate; we now have a satisfactory explanation. However, the second property remains mysterious even with the explicit formula. A sequence $(a_1,a_2,\dots)$ is called $\textbf{log-concave}$ if $a_k^2 \geq a_{k-1}a_{k+1}$ for all $k\geq 2$; this property means that the sequence increases ``smoothly'' until it hits a maximum, and then decreases smoothly afterward. Many combinatorial sequences, such as binomial coefficients and Eulerian numbers, are known to be log-concave; we conjecture that so too is the unsigned $h$-vector for general disjoint unions of simplices.

\begin{conjecture} \label{distinct-simplices-unimodal}
Let $\Gamma_\lambda$ be a disjoint union of simplices $\Delta_{1}\cup\cdots\cup\Delta_{\a}$ where $\l$ is a partition and each $\Delta_i$ is a $(\l_i-1)$-simplex. If the $d_i$ are all distinct, then the unsigned $h$-vector $( |h_1|, |h_2| ,\dots, |h_{\l_1}|)$ is log-concave.
\end{conjecture}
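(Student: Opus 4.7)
The plan is to translate log-concavity of the unsigned $h$-vector into a statement about a single polynomial. Combining \textbf{Proposition \ref{h-vec-distinct-simplices}} with a short summation swap yields the generating function
\[
\sum_{k=1}^{\l_1}|h_k|\,t^k \;=\; \sum_{s\in S}\bigl[(1+t)^{\l_1}-(1+t)^{\l_1-s}\bigr],
\]
where $S=\{\l_2,\dots,\l_\a\}$. The distinct-parts hypothesis says precisely that $S$ is a genuine $(\a-1)$-element subset of $[1,\l_1-1]$, so the question reduces to whether the coefficients of this polynomial in $t$ always form a log-concave sequence.

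My first instinct is the standard real-rootedness approach: if the polynomial has only real roots, then Newton's inequalities immediately yield log-concavity. Each summand factors as $(1+t)^{\l_1-s}\bigl((1+t)^s-1\bigr)$, whose non-real roots are shifted $s$-th roots of unity, and these do not cancel upon summing. Already for the staircase $\l=(4,3,2,1)$ the resulting polynomial $3t^4+11t^3+14t^2+6t$ fails to be real-rooted—its cubic factor has derivative with negative discriminant, hence only one real root—so this route cannot succeed in general. The distinct-parts assumption must encode something finer than real-rootedness; concretely, it forces the coefficient sequence $a_m := \mu_m-1$ from \textbf{Proposition \ref{h-vec-distinct-simplices}} to have all successive differences in $\{0,1\}$, which is the structural input a proof must exploit.

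A more combinatorial route uses the interpretation
\[
|h_k|\;=\;\sum_{\substack{T\subseteq[\l_1]\\|T|=k}}a_{\min T},
\]
obtained by swapping sums in the formula of \textbf{Proposition \ref{h-vec-distinct-simplices}}. Log-concavity then becomes an inequality between weighted counts of subsets, inviting an injection from pairs $(T_1',T_2')$ with $|T_1'|=k-1$, $|T_2'|=k+1$ into pairs $(T_1,T_2)$ with $|T_1|=|T_2|=k$. The classical ``ballot swap''—follow the partial sums of $\mathbf{1}[i\in T_1']-\mathbf{1}[i\in T_2']$ and transfer the first index $j^*$ at which the running sum reaches $-1$ from $T_2'$ into $T_1'$—never hurts the weight $a_{\min T_1}$, but it can increase $\min(T_2)$ and thereby reduce the weight $a_{\min T_2}$. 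Compensating for this weight deficit is the crux, and it seems to need exactly the unit-step property of $a$ that distinctness provides.

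Should the direct injection resist, a natural fallback is to prove the stronger claim that the unsigned $h$-vector is \emph{ultralog-concave}, i.e.\ that $|h_k|/\binom{\l_1}{k}$ is log-concave in $k$. This implies ordinary log-concavity because the binomial coefficients $\binom{\l_1}{k}$ are themselves log-concave, and it holds in every example I have checked. Moreover, the ratio $|h_k|/\binom{\l_1}{k}$ admits the probabilistic interpretation $\mathbb{E}[a_{\min T}]$ where $T$ is a uniformly random $k$-subset of $[\l_1]$, which opens the door to coupling or stochastic-domination arguments taking direct advantage of the fact that $a$ is a $\{0,1\}$-step function. In all three approaches the main obstacle is the same: converting the structural hypothesis that the parts of $\l$ are distinct into an analytic or bijective tool strong enough to enforce log-concavity in the absence of real-rootedness.
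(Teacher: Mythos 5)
This statement is labeled as a \emph{conjecture} in the paper; the authors offer no proof of it, so there is nothing to compare your argument against except the conjecture itself. More importantly, your proposal is not a proof: it is a (well-informed) research plan in which every branch terminates at an explicitly unresolved step. Your preliminary reductions are correct and worth keeping --- the generating-function identity $\sum_{k\geq 1}|h_k|t^k=\sum_{j=2}^{\a}\bigl[(1+t)^{\l_1}-(1+t)^{\l_1-\l_j}\bigr]$ follows from the intermediate formula $(-1)^{k+1}h_k=\sum_{j\geq 2}\bigl[\binom{\l_1}{k}-\binom{\l_1-\l_j}{k}\bigr]$ in the proof of Proposition \ref{h-vec-distinct-simplices}; your computation for $\l=(4,3,2,1)$ giving $3t^4+11t^3+14t^2+6t$ is right, and that polynomial is indeed not real-rooted while its coefficients are log-concave, so the Newton-inequality route is genuinely closed off; and the interpretation $|h_k|=\sum_{|T|=k}(\mu_{\min T}-1)$ is a correct restatement of the proposition. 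But none of this establishes the inequality $|h_k|^2\geq |h_{k-1}|\,|h_{k+1}|$.

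The concrete gap is the same one you name yourself three times: in the injection approach you concede that the ballot swap can decrease the weight $a_{\min T_2}$ and that ``compensating for this weight deficit is the crux''; in the real-rootedness approach you prove the method fails rather than that the conjecture holds; and the ultra-log-concavity claim ($|h_k|/\binom{\l_1}{k}$ log-concave) is asserted only on the strength of checked examples, which is exactly the evidentiary status the conjecture already has in the paper. Until one of these cruxes is actually resolved --- for instance, by exhibiting a weight-compensating injection that exploits the unit-step property of $m\mapsto\mu_m-1$, or by proving the stochastic-domination statement behind $\mathbb{E}[a_{\min T}]$ --- the conjecture remains open, and this proposal should be presented as a reduction and a survey of obstructions, not as a proof.
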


It is worth noting that the analogous conjecture is false for the $f$-vector; for instance, having a single high-dimensional simplex, together with an excessive number of low-dimensional ones, will cause the $f$-vector to spike in low dimension, which is not permitted in a log-concave sequence.

\subsection{Subspace Arrangements and Intersection Posets}


The $f$-vector and $h$-vector are two combinatorial features of $\Gamma$, and we now wish to discuss a geometric one. Fix a field $\mathbb{K}$. A \textbf{subspace arrangement} is a finite collection of subspaces $\mathcal A$ in $\mathbb{K}^r$ for some $r$, such that $S\not\subseteq T$ for any $S,T\in\mathcal A$. For any simplicial complex $\Gamma$ on $V=\{v_1,\dots, v_m\}$, we define the \textbf{associated subspace arrangement} $\mathcal A_\Gamma$ (or $\mathcal A$ if there is no risk of confusion), which consists of all subspaces in $\mathbb K^m$ of the form
$$ S_F = \{(z_1,\dots, z_m): z_{i}=0 \text{ for all } v_i\in V\setminus F \}, $$
where $F$ is an \emph{facet} of $\Gamma$.

This object arises naturally in algebraic geometry: the \textbf{Stanley-Reisner ideal} $I_\Gamma$ of $\Gamma$ is the ideal in the polynomial ring $\mathbb K[v_1,\dots, v_m]$ which is generated by the monomials $\prod_{v\notin F} v$ for all faces $F\in\Gamma$. One studies the geometric properties of ideals $J$ in polynomial rings by considering their \textbf{variety} $V(J)\subseteq \mathbb K^m$, defined as $V(J) = \{z\in \mathbb K^{m}: f(z)=0 \text{ for all } f\in J\}.$ It is a straightforward but tedious exercise in element-chasing to show that $V(I_\Gamma)$ is the union of all the subspaces in $\mathcal A_\Gamma$.

We now define a tool to record information about subspace arrangements. The \textbf{intersection poset} of a subspace arrangement $\mathcal A$, denoted $\inlatA$, is the finite set containing all intersections of subspaces $S_F\in\mathcal{A}$, ordered by reverse-inclusion: $I\leq J$ if $J\subseteq I$. Notice that this poset has a minimum element $\hat 0=\mathbb K^{|V|}$ given by the empty intersection, and and a maximum element $\hat 1$, given by intersecting all of the subspaces in the arrangement. Note that the subspaces $S$ in $\mathcal A$ themselves are elements of $\inlatA$. They are not minimal elements because $S\subsetneq \mathbb K^{|V|}$, but they are \textbf{atoms}; that is, for any intersection $I\in\inlatA$, it is impossible for $\hat 0 < I < S$.

A \textbf{chain} is a collection of subspaces $S_0,\dots,S_r\subseteq \inlatA$ such that $S_0\leq\cdots\leq S_r$; the number $r$ is called the \textbf{length} of the chain. A chain $\{S_1,\dots, S_r\}$ is called \textbf{maximal} if it is not a proper subset of any (longer) chain. Finally, $\inlatA$ is called \textbf{graded of rank $r$} if every maximal chain has the same length $r$. The following proposition shows that the intersection poset of the subspace arrangement of a disjoint union of simplices is graded of rank $2$.

\begin{proposition}\label{max-chain-union}
 Let $\Gamma=\Delta_1\cup\ldots\cup\Delta_\alpha,$ be a simplicial complex on a vertex set $V$ where each $\Delta_i$ is a $d_i$-simplex. Let $\inlatA$ be the corresponding intersection poset. Then every maximal chain in $\inlatA$ is of the form $C=\{\hat 0=a_0<a_1<a_2=\hat 1\}$.
\end{proposition}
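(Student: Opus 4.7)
The plan is to show that the intersection lattice $\mathcal{L}(\mathcal{A})$ has exactly three "levels'': the unique minimum $\hat{0} = \mathbb{K}^V$, the atoms (one for each component simplex of $\Gamma$), and the unique maximum $\hat{1} = \{0\}$. Once this is established, any maximal chain is forced to traverse one element at each level, giving the stated form.

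First I would identify the facets of $\Gamma$. Since $\Gamma$ is a disjoint union of simplices $\Delta_1,\dots,\Delta_\alpha$, its facets are precisely the vertex sets $V_1,\dots,V_\alpha$ of the constituent simplices, and by hypothesis these are pairwise disjoint. The associated arrangement $\mathcal{A}_\Gamma$ therefore consists of the coordinate subspaces $S_{V_i} = \{z \in \mathbb{K}^V : z_v = 0 \text{ for all } v \notin V_i\}$ for $i = 1,\dots,\alpha$.

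The crux of the proof is the observation that for any subset $K \subseteq [\alpha]$ with $|K| \geq 2$, one has
\[
\bigcap_{k \in K} S_{V_k} = \{z \in \mathbb{K}^V : z_v = 0 \text{ for all } v \notin \textstyle\bigcap_{k \in K} V_k\} = \{0\},
\]
because the vertex sets $V_k$ are pairwise disjoint. Consequently, the only elements of $\mathcal{L}(\mathcal{A})$ are the empty intersection $\mathbb{K}^V = \hat{0}$, the $\alpha$ atoms $S_{V_1},\dots,S_{V_\alpha}$, and the element $\{0\} = \hat{1}$ obtained from any multi-way intersection. Moreover, no $S_{V_i}$ is contained in $S_{V_j}$ for $i \neq j$ (since each $V_i$ is nonempty and the $V_i$'s are distinct), so the atoms form an antichain strictly between $\hat{0}$ and $\hat{1}$ in the reverse-inclusion order.

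It follows immediately that every maximal chain is of the form $\hat{0} < S_{V_i} < \hat{1}$ for some $i \in [\alpha]$, which is the desired conclusion. There is no real obstacle in this argument; the whole proposition reduces to the remark that pairwise disjoint supports force any iterated intersection to collapse to $\{0\}$.
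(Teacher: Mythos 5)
Your proposal is correct and follows essentially the same route as the paper: both arguments hinge on the observation that the facets $V_1,\dots,V_\alpha$ are pairwise disjoint, so that $(V\setminus V_i)\cup(V\setminus V_j)=V$ and hence any intersection of two or more of the subspaces $S_{V_i}$ collapses to $\{0\}$, leaving only the three levels $\hat 0$, atoms, and $\hat 1$. Your write-up is if anything slightly more explicit than the paper's in noting that the atoms form an antichain, but the underlying argument is the same.
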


\begin{proof}
Let $C=\{a_0<\dots<a_r\}$ be a maximal chain in $\inlatA.$ Since $C$ is maximal, $a_0=\hat 0$ and $a_r=\hat 1$, or else $\hat 0$ or $\hat 1$ could be added to create a longer chain. Moreover, $a_1$ must be an atom, otherwise there would be some element $a\in\inlatA$ such that $a_0<a<a_1$, but then $C'=\{a_0<a<a_1<\cdots<a_r\}$ would be a longer chain than $C$.

Let $F_1,F_2$ be facets in $\Gamma$ and let $S_1,S_2\in\mathcal{A}$ be the corresponding subspaces. Since $F_1,F_2\in\Gamma,$ then $F_1\cap F_2=\varnothing.$ Furthermore, since $F_1\cap F_2=\varnothing,$ then $(V\setminus F_1)\cup(V\setminus F_2)=V.$ Therefore, by definition of $S_1,S_2\in\mathcal{A},$ $S_1\cap S_2=\{0\}=a_2.$ 

The intersection of any subspaces in $\mathcal{A}$ is thus $\{0\},$ so any maximal chain is of the form $C=\{\hat 0=a_0<a_1<a_2=\hat 1\},$ as desired.
\end{proof}

Although in this paper all intersection posets will arise from simplicial complexes, historically the most well-studied subspace arrangements are those for which all subspaces have codimension one, known as \textbf{central hyperplane arrangements}. Central hyperplane arrangements in $\mathbb K^r$ are graded of rank $r$, essentially because all atoms have codimension one, so intersections having codimension two are precisely the intersections of two subspaces. Similarly, codimension three corresponds to the intersection of three subspaces, and so on.

For general subspace arrangements, the subspaces may have higher codimension. So the above argument fails, and indeed there no longer a guarantee that $\inlatA$ is graded at all. Despite this deficit, Athanasiadis \cite{Athanasiadis} showed that the so-called ``finite field method'', which counts the points in $\inlatA$ when $\mathbb K$ is a sufficiently large finite field, can be extended to general subspace arrangements. The classical result uses the notion of the characteristic polynomial for a hyperplane arrangement; the appropriate notion of a \textbf{characteristic polynomial} for a general subspace arrangement $\mathcal{A}$ is $$\chi_\mathcal{A}(x)=\sum_{t\in\inlatA}\mu(\Hat{0},t)x^{\dim(t)},$$ 
where the \textbf{M{\"o}bius function} $\mu$ is defined recursively by $\mu(s,s)=1$ and
$$\mu(s,t) = -\sum_{s\leq z< t} \mu(s,z).$$

Because of the simple structure for $\inlatA$ suggested by \textbf{Proposition \ref{max-chain-union}}, we can also explicitly compute the characteristic polynomial in this case:

\begin{proposition}\label{char-poly-union}
Let $\Gamma=\Delta_1\cup\ldots\cup\Delta_\alpha,$ be a simplicial complex on a vertex set $V$, where each $\Delta_i$ is a $d_i$-simplex. Let $\inlatA$ be the corresponding intersection poset. Then, 
  $$\chi_\mathcal{A}(x)=x^{|V|}-\sum_{i=1}^\alpha x^{d_i+1}+\a-1.$$
\end{proposition}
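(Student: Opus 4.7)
My plan is to compute $\chi_\A(x)$ directly from the definition, exploiting the very rigid lattice structure established in Corollary \ref{lattice-rank2}.

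First, I would enumerate the elements of $\inlatA$. By Corollary \ref{lattice-rank2}, the lattice has exactly three layers: the minimum $\hat 0 = \mathbb{K}^{|V|}$ of dimension $|V|$; the $\alpha$ atoms $S_F$, one for each facet $F$ of $\Gamma$, where by the definition of the coordinate subspace we have $\dim S_F = |F|$; and the maximum $\hat 1$, which was identified with $\{0\}$ (of dimension $0$) inside the proof of Proposition \ref{max-chain-union}.

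Next, I would compute the Möbius values using the defining recursion on this three-layer lattice. Clearly $\mu(\hat 0, \hat 0) = 1$, and for each atom $\mu(\hat 0, S_F) = -\mu(\hat 0, \hat 0) = -1$. For the top element, summing the recursion over all $\alpha + 1$ elements strictly below $\hat 1$ yields
$$\mu(\hat 0, \hat 1) = -1 - \alpha \cdot (-1) = \alpha - 1.$$
Substituting these weights and the computed dimensions into $\chi_\A(x) = \sum_{t\in\inlatA} \mu(\hat 0, t) x^{\dim t}$ then produces exactly the claimed expression.

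I anticipate no serious obstacle, since Corollary \ref{lattice-rank2} has already done all the structural work; the computation reduces to a single application of Möbius recursion on a poset with $\alpha + 2$ elements. The only point worth flagging is the degenerate case $\alpha = 1$, where the unique atom coincides with $\hat 1$ and $\inlatA$ has only two elements; but then $\alpha - 1 = 0$, the single facet contributes the only subtracted $x^{|F|}$ term, and the formula still holds by direct inspection.
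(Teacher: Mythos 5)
Your proof is correct and follows essentially the same route as the paper: both enumerate the three layers of $\inlatA$ guaranteed by Proposition \ref{max-chain-union} (equivalently Corollary \ref{lattice-rank2}), read off the dimensions $|V|$, $|F|$, and $0$, and obtain $\mu(\hat 0,\hat 1)=\alpha-1$ from the M\"obius recursion before substituting into the definition of $\chi_{\mathcal A}$. (Your aside on $\alpha=1$ is the only addition; note that in that case the single facet is all of $V$, so $S_F=\hat 0$ and the poset collapses even further than you describe --- both you and the paper are implicitly working with $\alpha\geq 2$.)
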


\begin{proof}
By Proposition \ref{max-chain-union}, every element $t\in\inlatA$ is either $\hat 0=\mathbb K^{|V|}$, or $\hat 1=\{0\}$, or an atom. Clearly $\mu(\hat{0},\hat{0})=1$, and $\dim(\hat 0)=|V|$. By the definition of an atom, $\hat 0\leq z< t$ implies $z=\hat 0$, so $\mu(\hat{0},a_1)=-1$. Moreover, every atom in $\inlatA$ is a subspace 
$$S_i = \{(z_{v_1},\ldots,z_{v_n}):z_v=0,\forall v\in V\setminus \Delta_i\}$$
for some facet $\Delta_i$, and so $\dim(S_F)=d_i+1$. Finally, every element $z\in\inlatA$ satisfies $\hat 0\leq z<\hat 1$, so $\mu(\hat{0},\hat{1})=-(1-\a)$, and of course $\dim(\hat 1)=0$.

Plugging this data into the defintion, we find $\chi_\mathcal{A}(x)=x^{|V|}-\sum_{i=1}^\alpha x^{d_i+1}+\a-1.$ as desired.
\end{proof}

We make a special note of the case in which each simplex has the same dimension.

\begin{corollary}\label{char-poly-union-eqdim}
If $\Gamma=\Delta_1 \cup \cdots\cup \Delta_\a$ where each $\Delta_i$ is a $\delta$-simplex, for some fixed dimension $\delta$, then
$$\chi_\mathcal{A}(x)=x^{\a(\delta+1)}-\a\cdot x^{\delta+1}+(\a-1).$$
\end{corollary}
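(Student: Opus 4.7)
The plan is to apply Proposition \ref{char-poly-union} and simply specialize its general formula to the equidimensional case, so the work is reduced to two bookkeeping steps: determining $|V|$ and simplifying the facet-sum $\sum_{F\in\Gamma}x^{|F|}$.

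First I would observe that since each $\Delta_i$ is a $\delta$-simplex, it sits on a vertex set of size $\delta+1$. Because $\Gamma$ is a \emph{disjoint} union, the underlying vertex sets of the $\Delta_i$ are pairwise disjoint, and so $|V|=\sum_{i=1}^{\a}(\delta+1)=\a(\delta+1)$. This gives the leading term $x^{\a(\delta+1)}$ of the desired polynomial.

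Next I would identify the facets of $\Gamma$. Since each $\Delta_i$ is a full simplex on its own vertex set and these vertex sets are disjoint, the unique maximal face of each $\Delta_i$ is its full vertex set (of size $\delta+1$), and no face of $\Delta_i$ is contained in any face of $\Delta_j$ for $j\neq i$. Hence $\Gamma$ has exactly $\a$ facets, each of cardinality $\delta+1$. Substituting into the formula from Proposition \ref{char-poly-union} yields
$$\chi_\mathcal{A}(x)=x^{\a(\delta+1)}-\sum_{i=1}^{\a}x^{\delta+1}+(\a-1)=x^{\a(\delta+1)}-\a\cdot x^{\delta+1}+(\a-1),$$
as claimed.

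There is essentially no obstacle here; the only subtlety worth flagging in the write-up is that the sum in Proposition \ref{char-poly-union}, despite being written over $F\in\Gamma$, really ranges over facets (as is clear from its proof, where each $F$ indexes a subspace $S_F\in\mathcal{A}$). Once this is noted, the corollary is a one-line specialization.
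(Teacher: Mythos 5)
Your proof is correct and follows the same route as the paper: both specialize Proposition \ref{char-poly-union} by computing $|V|=\a(\delta+1)$ from disjointness and noting that the $\a$ facets each have cardinality $\delta+1$. Your parenthetical that the sum in Proposition \ref{char-poly-union} ranges over facets (despite the notation $F\in\Gamma$) is a worthwhile clarification, but otherwise the arguments coincide.
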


\section{Main Results and Conjectures}

Our main results are a complete description of $\Delta_{n,\ell}$ for certain families of $n$ and $\ell$, as well as some implications for their intersection posets. In particular, it is interesting that these posets are graded for all of the families that we considered here, and we wish to stress that this is not true for every $\Delta_{n,\ell}$. In this section we only present the results, leaving the calculations themselves to Section \ref{all-the-proofs}.

\begin{restatable}{theorem}{doubling}
\label{total-doubling}
Consider $\Gamma=\Delta_{2\ell,\ell}$ for any integer $\ell$. Let $\ell=\rho\cdot 2^m$ with $m\geq 0$ and $\rho$ an odd integer, and $\mathcal A$ be the subspace arrangement associated to $\Gamma$.
\begin{enumerate}[(a)]
    \item\label{doubling} $\Gamma = \Delta_0\cup \Delta_1\cup\cdots\cup \Delta_{2^{m-1}}$, where $\Delta_t$ is the $(\rho-1)$-simplex on the set $V_t$ of all $x\in \mathbb Z/2\ell\mathbb{Z}$ congruent to $2t + 1$ mod $2^{m+1}$.
    \item $\inlatA$ is a graded poset of rank $2$.
    \item The characteristic polynomial of $\mathcal A$ is $\chi_\mathcal{A}(x)=x^{\ell}-2^mx^{\rho}+(2^m-1).$
\end{enumerate}
\end{restatable}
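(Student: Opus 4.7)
The plan is to first establish (a), after which parts (b) and (c) are immediate applications of \textbf{Corollary \ref{lattice-rank2}} and \textbf{Corollary \ref{char-poly-union-eqdim}} (with $\a = 2^m$ and $\delta = \rho - 1$), respectively. The substance of the theorem therefore lies in showing that the facets of $\Delta_{2\ell,\ell}$ are exactly the sets $V_t$ for $t \in \{0, 1, \dots, 2^m - 1\}$, i.e.\ that a set is $\ell$-zero-sumfree if and only if it is contained in a single $V_t$.

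I would first verify two easy observations. Every element of $V_t$ is congruent to $2t+1$ modulo $2^{m+1}$, so for any $s_1, \ldots, s_k \in V_t$ and any nonnegative coefficients $c_i$ with $\sum c_i = \ell$, the sum $\sum c_i s_i$ reduces to $\ell(2t+1) = 2^m \rho(2t+1) \equiv 2^m \pmod{2^{m+1}}$, which is nonzero modulo $2\ell = 2^{m+1}\rho$. Hence each $V_t$ is itself $\ell$-zero-sumfree. Conversely, any even element $s$ gives $\ell s \equiv 0 \pmod{2\ell}$ as a singleton, so even elements are excluded from every face.

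The main obstacle is ruling out faces containing elements from two distinct $V_a$ and $V_b$; by the simplicial-complex property it suffices to handle the two-element case. Given odd $x \in V_a$, $y \in V_b$ with $a \neq b$ in $\{0, \dots, 2^m - 1\}$, I want to exhibit nonnegative integers $c_x, c_y$ summing to $\ell$ with $c_x x + c_y y \equiv 0 \pmod{2\ell}$. Writing $c_y = \ell - c_x$ reduces this to $c_x(x - y) \equiv -\ell y \pmod{2\ell}$. The crucial input is that $v_2(x - y) = 1 + v_2(a - b) \leq m$ — because $a - b$ is a nonzero integer with $|a - b| < 2^m$ — while $v_2(\ell y) = m$ since $y$ is odd. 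Thus, setting $g := \gcd(x - y, 2\ell)$, the 2-part of $g$ is at most $2^m$ and divides the 2-part of $\ell y$, while the odd part of $g$ divides $\rho$ and hence the odd part $\rho y$ of $\ell y$. So $g \mid \ell y$ and solutions $c_x$ exist, forming a coset modulo $2\ell/g$. Since $x - y$ is even we have $g \geq 2$, hence $2\ell/g \leq \ell$, so the smallest nonnegative solution $c_x^*$ lies in $\{0, 1, \dots, \ell - 1\}$. Finally, $c_x^* = 0$ would force $\ell y \equiv 0 \pmod{2\ell}$, i.e.\ $y$ even — contradicting $y \in V_b$. Thus $c_x^* \in \{1, \ldots, \ell - 1\}$ yields a valid certificate that $\{x, y\}$ is not $\ell$-zero-sumfree.

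These three observations together imply that the facets of $\Gamma$ are precisely the $V_t$, and since the $V_t$ partition the odd residues of $\mathbb Z/2\ell\mathbb Z$ into $2^m$ classes of size $\rho$ each, this completes (a); parts (b) and (c) then follow from the earlier corollaries. The main difficulty is entirely in step three: the delicate coefficient construction, which hinges on carefully combining the 2-adic valuation bound $v_2(a - b) \leq m - 1$ with the coprimality between $2^{m+1}$ and $\rho$ to control both parts of $\gcd(x-y, 2\ell)$ simultaneously.
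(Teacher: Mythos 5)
Your proposal is correct and follows essentially the same route as the paper: exclude the even vertices via $\ell s\equiv 0$, show each residue class $V_t$ sums to $2^m\rho\not\equiv 0\pmod{2\ell}$, rule out cross-class pairs by solving a linear congruence for the coefficient $r=c_x$ using the bound $v_2(x-y)\leq m$, and then invoke Corollary \ref{lattice-rank2} and Corollary \ref{char-poly-union-eqdim} for parts (b) and (c). Your use of the standard $\gcd$-solvability criterion (split into $2$-part and odd part) is a cleaner packaging of the paper's explicit factoring of $t_1-t_2=t_3\cdot 2^e$ and of $\gcd(\,\cdot\,,\rho)$, but the underlying argument is the same.
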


\begin{example}
We draw some typical examples of the above theorem: $\Delta_{2^{m+1}\rho,2^m\rho}$ for $\rho=3$ and $m=1,2,3$.
\begin{center}
\begin{tikzpicture}[scale=0.35]
\pgfmathsetlengthmacro{\rad}{38pt};
\pgfmathsetlengthmacro{\vertsize}{\rad*0.1666};
\pgfmathsetlengthmacro{\distance}{1.5*\rad};
\pgfmathsetlengthmacro{\largedist}{8*\rad};

\pgfmathsetmacro{\triangleopacity}{0.4};
\definecolor{dimtwocolor}{RGB}{140,140,200}
\begin{scope}[shift={(0,0)}]
\begin{scope}[shift={(0,0)}]
	\coordinate (a1) at (90:\rad);
	\coordinate (b1) at (210:\rad);
	\coordinate (c1) at (330:\rad);
	\draw [ultra thick, draw=black, fill=dimtwocolor, fill opacity=\triangleopacity] (a1) -- (b1) -- (c1) -- cycle;
	\draw [black, fill=black] (a1) circle [radius=\vertsize];
	\draw [black, fill=black] (b1) circle [radius=\vertsize];
	\draw [black, fill=black] (c1) circle [radius=\vertsize];
	\node[yshift=2*\vertsize] at (a1) {1};
	\node[yshift=-2*\vertsize] at (b1) {3};
	\node[yshift=-2*\vertsize] at (c1) {5};

	\node[xshift=0*\distance,yshift=-.75*\distance] at (0,0) {\LARGE $\Delta_{6,\,3}$};
\end{scope}

\begin{scope}[shift={(6.5*\distance,0)}]
	\begin{scope}[shift={(-\distance,0)}]
		\coordinate (a1) at (90:\rad);
		\coordinate (b1) at (210:\rad);
		\coordinate (c1) at (330:\rad);
		\draw [ultra thick, draw=black, fill=dimtwocolor, fill opacity=\triangleopacity] (a1) -- (b1) -- (c1) -- cycle;
		\draw [black, fill=black] (a1) circle [radius=\vertsize];
		\draw [black, fill=black] (b1) circle [radius=\vertsize];
		\draw [black, fill=black] (c1) circle [radius=\vertsize];
		\node[yshift=2*\vertsize] at (a1) {1};
		\node[yshift=-2*\vertsize] at (b1) {5};
		\node[yshift=-2*\vertsize] at (c1) {9};
	\end{scope}

	\begin{scope}[shift={(\distance,0)}]
		\coordinate (a1) at (90:\rad);
		\coordinate (b1) at (210:\rad);
		\coordinate (c1) at (330:\rad);
		\draw [ultra thick, draw=black, fill=dimtwocolor, fill opacity=\triangleopacity] (a1) -- (b1) -- (c1) -- cycle;
		\draw [black, fill=black] (a1) circle [radius=\vertsize];
		\draw [black, fill=black] (b1) circle [radius=\vertsize];
		\draw [black, fill=black] (c1) circle [radius=\vertsize];
		\node[yshift=2*\vertsize] at (a1) {3};
		\node[yshift=-2*\vertsize] at (b1) {7};
		\node[yshift=-2*\vertsize] at (c1) {11};
	\end{scope}

	\node[xshift=0*\distance,yshift=-.75*\distance] at (0,0) {\LARGE $\Delta_{12,\,6}$};
\end{scope}

\begin{scope}[shift={(16*\distance,0)}]
	\begin{scope}[shift={(-3*\distance,0)}]
		\coordinate (a1) at (90:\rad);
		\coordinate (b1) at (210:\rad);
		\coordinate (c1) at (330:\rad);
		\draw [ultra thick, draw=black, fill=dimtwocolor, fill opacity=\triangleopacity] (a1) -- (b1) -- (c1) -- cycle;
		\draw [black, fill=black] (a1) circle [radius=\vertsize];
		\draw [black, fill=black] (b1) circle [radius=\vertsize];
		\draw [black, fill=black] (c1) circle [radius=\vertsize];
		\node[yshift=2*\vertsize] at (a1) {1};
		\node[yshift=-2*\vertsize] at (b1) {9};
		\node[yshift=-2*\vertsize] at (c1) {17};
	\end{scope}

	\begin{scope}[shift={(-\distance,0)}]
		\coordinate (a1) at (90:\rad);
		\coordinate (b1) at (210:\rad);
		\coordinate (c1) at (330:\rad);
		\draw [ultra thick, draw=black, fill=dimtwocolor, fill opacity=\triangleopacity] (a1) -- (b1) -- (c1) -- cycle;
		\draw [black, fill=black] (a1) circle [radius=\vertsize];
		\draw [black, fill=black] (b1) circle [radius=\vertsize];
		\draw [black, fill=black] (c1) circle [radius=\vertsize];
		\node[yshift=2*\vertsize] at (a1) {3};
		\node[yshift=-2*\vertsize] at (b1) {11};
		\node[yshift=-2*\vertsize] at (c1) {19};
	\end{scope}

	\begin{scope}[shift={(\distance,0)}]
		\coordinate (a1) at (90:\rad);
		\coordinate (b1) at (210:\rad);
		\coordinate (c1) at (330:\rad);
		\draw [ultra thick, draw=black, fill=dimtwocolor, fill opacity=\triangleopacity] (a1) -- (b1) -- (c1) -- cycle;
		\draw [black, fill=black] (a1) circle [radius=\vertsize];
		\draw [black, fill=black] (b1) circle [radius=\vertsize];
		\draw [black, fill=black] (c1) circle [radius=\vertsize];
		\node[yshift=2*\vertsize] at (a1) {5};
		\node[yshift=-2*\vertsize] at (b1) {13};
		\node[yshift=-2*\vertsize] at (c1) {21};
	\end{scope}

	\begin{scope}[shift={(3*\distance,0)}]
		\coordinate (a1) at (90:\rad);
		\coordinate (b1) at (210:\rad);
		\coordinate (c1) at (330:\rad);
		\draw [ultra thick, draw=black, fill=dimtwocolor, fill opacity=\triangleopacity] (a1) -- (b1) -- (c1) -- cycle;
		\draw [black, fill=black] (a1) circle [radius=\vertsize];
		\draw [black, fill=black] (b1) circle [radius=\vertsize];
		\draw [black, fill=black] (c1) circle [radius=\vertsize];
		\node[yshift=2*\vertsize] at (a1) {7};
		\node[yshift=-2*\vertsize] at (b1) {15};
		\node[yshift=-2*\vertsize] at (c1) {23};
	\end{scope}

	\node[xshift=0*\distance,yshift=-.75*\distance] at (0,0) {\LARGE $\Delta_{24,\,12}$};
\end{scope}
\end{scope}
\end{tikzpicture}
\end{center}
\end{example}

In the previous section we discussed properties of complexes which are disjoint unions of simplices. The complex $\Delta_{2\ell,\ell}$ is thus a ``naturally occurring'' instance of such an object in which all the simplices have the same dimension. One may wonder whether more general unions of simplices also arise from a $\Delta_{n,\ell}$, and the next result answers this in the affirmative.

\begin{restatable}{theorem}{primepowers}
\label{total-primepowers}
Consider $\Gamma=\Delta_{p^e, p^e-1}$ for prime $p$ and and positive integer $e$. Let $\mathcal A$ be the subspace arrangement associated to $\Gamma$.
\begin{enumerate}[(a)]
    \item $\Delta_{p^{e},p^{e}-1}$ is a disjoint union of $e(p-1)$ simplices, with $e-1$ many $\left(p^j-1\right)$-simplices for each $0\leq j\leq e-1$. 
    \item $\inlatA$ is a graded poset of rank $2$.
    \item The characteristic polynomial of $\mathcal A$ is $\chi_\mathcal{A}(x)=x^{p^e-1}-(p-1)\sum^{e-1}_{j=0}x^{p^j}$.
\end{enumerate}
\end{restatable}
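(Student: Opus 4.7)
The plan is to establish part~(a) directly, after which parts~(b) and~(c) follow from Corollary~\ref{lattice-rank2} and Proposition~\ref{char-poly-union} respectively. The real content is to identify the facets of $\Gamma = \Delta_{p^e, p^e-1}$ and verify that they are pairwise disjoint, making $\Gamma$ a literal disjoint union of simplices.

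I would organize everything by the $p$-adic structure of $\mathbb{Z}/p^e\mathbb{Z}$. Every nonzero $s$ has a unique representation $s = p^k u$ with $0\le k\le e-1$ and $u$ a unit mod $p^{e-k}$; here $k = v_p(s)$, and $u\bmod p$ lies in $\{1,\dots,p-1\}$. For each admissible pair $(k,i)$ define
$$F_{k,i} = \{s \in \mathbb{Z}/p^e\mathbb{Z} : v_p(s) = k \text{ and } s = p^k u \text{ with } u \equiv i \pmod{p}\}.$$
These $e(p-1)$ sets partition $\mathbb{Z}/p^e\mathbb{Z}\setminus\{0\}$, and a routine unit count gives $|F_{k,i}| = p^{e-1-k}$. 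Reindexing by $j := e-1-k$ yields $p-1$ sets of size $p^j$, i.e.\ $(p^j-1)$-simplices, for each $0 \le j \le e-1$, which matches the shape demanded by~(a).

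Next I would prove two claims. First, each $F_{k,i}$ is a face: if $s_\alpha = p^k u_\alpha \in F_{k,i}$ and nonnegative $c_\alpha$ satisfy $\sum c_\alpha = p^e - 1$, then in $\mathbb{Z}/p^e\mathbb{Z}$ we have $\sum c_\alpha s_\alpha = p^k \sum c_\alpha u_\alpha$, and $\sum c_\alpha u_\alpha \equiv i(p^e-1) \equiv -i \not\equiv 0 \pmod{p}$, so this sum has $p$-adic valuation exactly $k < e$ and is therefore nonzero. Second, any pair $\{s_1,s_2\}$ drawn from two \emph{different} blocks is not a face. Set $k := \min(v_p(s_1), v_p(s_2))$ and write $s_i = p^k w_i$ in $\mathbb{Z}/p^e\mathbb{Z}$; the hypothesis that $s_1, s_2$ lie in different blocks is precisely the statement that $w_2 - w_1$ is a unit mod $p$ (and hence mod $p^{e-k}$). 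The zero-sum equation $c_1 s_1 + c_2 s_2 \equiv 0\pmod{p^e}$ with $c_1+c_2=p^e-1$, after substituting $c_1 = p^e - 1 - c_2$, reduces to
$$c_2(w_2 - w_1) \equiv w_1 \pmod{p^{e-k}},$$
which admits a unique solution $c_2 \in \{0,\dots,p^{e-k}-1\}$; the complementary $c_1 = p^e - 1 - c_2$ is nonnegative since $p^{e-k} \le p^e$, exhibiting a forbidden zero-sum.

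Combining the two claims, the facets of $\Gamma$ are exactly the sets $F_{k,i}$, and since these partition the vertex set, $\Gamma$ is a disjoint union of the corresponding simplices. Part~(b) is then immediate from Corollary~\ref{lattice-rank2}, and part~(c) follows by substituting $|V|=p^e-1$, $\a=e(p-1)$, and the facet sizes $|F_{k,i}|=p^{e-1-k}$ into Proposition~\ref{char-poly-union}. The main obstacle is the second claim: one must handle the subcases $v_p(s_1) = v_p(s_2)$ and $v_p(s_1)\ne v_p(s_2)$ uniformly. The $\min$-valuation normalization above is what unifies them (in the unequal case, $w_2 \equiv 0 \pmod p$ automatically forces $w_2-w_1 \equiv -w_1 \pmod p$ to be a unit), but one still has to verify that the constructed $c_1, c_2$ both land in $\{0,\dots,p^e-1\}$.
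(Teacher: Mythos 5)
Your proposal is correct and follows essentially the same route as the paper: your blocks $F_{k,i}$ are precisely the paper's residue classes $V_{i,j}$ (with $k=j-1$), you prove the same two claims (each block is a face; vertices from distinct blocks are non-adjacent), and the only real difference is that your $\min$-valuation normalization disposes of in one computation the three subcases ($i\neq i'$ with $j=j'$, $i=i'$ with $j\neq j'$, both different) that the paper works through separately. One caveat on part (c): a faithful substitution into Proposition \ref{char-poly-union}, which is exactly what both you and the paper invoke, yields the extra constant term $\alpha-1=e(p-1)-1$ (compare the constant $2^m-1$ in Theorem \ref{total-doubling}); this term is absent from the polynomial stated in part (c), so your computation, carried out literally, gives a formula differing from the one in the theorem --- the discrepancy lies in the paper's stated formula rather than in your argument, but you should record the constant explicitly rather than asserting agreement with the statement as written.
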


\begin{example}
We draw a typical example of the above theorem: $\Delta_{p^e,p^e-1}$ for $p=3,e=2$. 
\begin{center}
\begin{tikzpicture}[scale=0.35]
\pgfmathsetlengthmacro{\rad}{38pt};
\pgfmathsetlengthmacro{\vertsize}{\rad*0.1666};
\pgfmathsetlengthmacro{\distance}{1.5*\rad};
\pgfmathsetlengthmacro{\largedist}{8*\rad};

\pgfmathsetmacro{\triangleopacity}{0.4};
\definecolor{dimtwocolor}{RGB}{140,140,200}

\begin{scope}[shift={(0,0)}]
    \begin{scope}[shift={(0*\vertsize,0)}]
	\coordinate (d1) at (210:\rad);
	\coordinate (e1) at (330:\rad);
	\draw [black, fill=black] (d1) circle [radius=\vertsize];
	\draw [black, fill=black] (e1) circle [radius=\vertsize];
	\node[yshift=-2*\vertsize] at (d1) {3};
	\node[yshift=-2*\vertsize] at (e1) {6};
    \end{scope}
	\begin{scope}[shift={(20*\vertsize,0)}]
		\coordinate (a1) at (90:\rad);
		\coordinate (b1) at (210:\rad);
		\coordinate (c1) at (330:\rad);
		\draw [ultra thick, draw=black, fill=dimtwocolor, fill opacity=\triangleopacity] (a1) -- (b1) -- (c1) -- cycle;
		\draw [black, fill=black] (a1) circle [radius=\vertsize];
		\draw [black, fill=black] (b1) circle [radius=\vertsize];
		\draw [black, fill=black] (c1) circle [radius=\vertsize];
		\node[yshift=2*\vertsize] at (a1) {1};
		\node[yshift=-2*\vertsize] at (b1) {4};
		\node[yshift=-2*\vertsize] at (c1) {7};
	\end{scope}

	\begin{scope}[shift={(40*\vertsize,0)}]
		\coordinate (a1) at (90:\rad);
		\coordinate (b1) at (210:\rad);
		\coordinate (c1) at (330:\rad);
		\draw [ultra thick, draw=black, fill=dimtwocolor, fill opacity=\triangleopacity] (a1) -- (b1) -- (c1) -- cycle;
		\draw [black, fill=black] (a1) circle [radius=\vertsize];
		\draw [black, fill=black] (b1) circle [radius=\vertsize];
		\draw [black, fill=black] (c1) circle [radius=\vertsize];
		\node[yshift=2*\vertsize] at (a1) {2};
		\node[yshift=-2*\vertsize] at (b1) {5};
		\node[yshift=-2*\vertsize] at (c1) {8};
	\end{scope}
	\node[xshift=-1*\distance,yshift=0*\distance] at (0,0) {\LARGE $\Delta_{9,\,8}$};
\end{scope}
\end{tikzpicture}
\end{center}
\end{example}

We note that these two families are not the only parameters $n$ and $\ell$ which yield a disjoint union of simplices; for instance, $\Delta_{12,9}$ is the disjoint union of a 5-simplex and a 2-simplex. It may be interesting to seek a complete classification of the parameters $n$ and $\ell$ which exhibit this phenomenon.

We also note that in \textbf{Theorem \ref{total-primepowers}}, $n-\ell=1$. In all the explicit calculations we were able to carry out, we observed that $\Delta_{n,\ell}$ is somehow ``simpler'' when $n-\ell$ is small. The last theorem may be interpreted as some further evidence for this observation.

\begin{restatable}{theorem}{armslegs}
\label{total-armslegs}
Consider $\Gamma_s = \Delta_{2p,2p-s}$ for prime $p\neq 2$ and positive integer $s\in\{1,2,3\}$. Let $\mathcal A_s$ be the subspace arrangement associated to $\Gamma_s$.
\begin{enumerate}[(a)]
    \item 
    \begin{itemize}
        \item\label{2p,2p-2} $\Gamma_2$ is a disjoint union of the $p-1$ edges $\{i_1,i_2\}$ for which $i_1\equiv{i_2}\not\equiv{0}\bmod{p}$. 
        \item\label{2p,2p-1} The facets of $\Gamma_1$ are the facets of $\Gamma_2$ together with $\{1,3,5,...,2p-1\}$.
        \item\label{2p,2p-3} If $p\geq 5$, the facets of $\Gamma_3$ are the facets of $\Gamma_1$ together with the $p-1$ edges $\{i,j\}$ for which $i$ is odd, $j\neq 0$ is even, and $j\equiv-2i\bmod{2p}$.
    \end{itemize}
    
    \item $\mathcal L(\mathcal A_s)$ is a graded poset. In particular, $\mathcal L(\mathcal A_2)$ is of rank $2$; and $\mathcal L(\mathcal A_1)$ and  $\mathcal L(\mathcal A_3)$ are each of rank $3$.
    
    \item The characteristic polynomial of $\mathcal A_s$ is
    $$ \chi_{\mathcal A_s}(x) = 
        \left\{\begin{array}{lr}
        x^{2p-1}-x^{p}-(p-1)x^2+(p-1)x & : s=1\\
        x^{2p-2}-p\cdot x^{2}+p-1 & : s=2\\
        x^{2p-1}-x^p-2(p-1)x^2+2(p-1)x & : s=3
        \end{array}\right.
    $$
\end{enumerate}
\end{restatable}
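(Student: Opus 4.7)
The plan is to establish part (a) first by explicitly classifying the $\ell$-zero-sumfree subsets of $\mathbb{Z}/2p\mathbb{Z}$ for $\ell = 2p - s$, and then derive parts (b) and (c) from the resulting facet lists. The central reformulation I would use is that a subset $\sigma = \{s_1,\dots, s_h\}$ fails to be $\ell$-zero-sumfree exactly when $\sum a_i s_i \equiv 0 \pmod{2p}$ admits a nonnegative integer solution with $\sum a_i = \ell$; since $p$ is odd, the Chinese remainder theorem decouples this into a mod-$2$ (parity) condition and a mod-$p$ condition, so the problem reduces to determining which ``residue profiles'' of $\sigma$ permit simultaneous vanishing.

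For each listed facet $F$, the forward direction (showing $F$ is zero-sumfree) reduces to a short calculation. The big simplex $\{1, 3, \dots, 2p-1\}$ has only odd elements, so $\sum a_i s_i \equiv \ell \equiv s \pmod 2$, which is nonzero for $s \in \{1, 3\}$. The edges $\{t, t+p\}$ with $t \not\equiv 0 \pmod p$ share mod-$p$ residue $t$, so $\sum a_i s_i \equiv -st \pmod p$, nonzero for $p \geq 5$ and $s \in \{1,2,3\}$. For the extra edges $F_a = \{a, -2a\}$ in the $s=3$ case, substituting $s_1 = a$ and $d = s_2 - s_1 = -3a$ reduces the equation to $-3a(1+a_2) \equiv 0 \pmod{2p}$; since $\gcd(3a, 2p) = 1$ for odd $a \not\equiv 0 \pmod p$ with $p \geq 5$, this forces $a_2 = 2p - 1$, exceeding $\ell = 2p - 3$. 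For completeness---that no other $\sigma$ is zero-sumfree---I would do a case analysis by residue profile: any $\sigma$ containing $0$ or $p$ is bad by inspection; any $\sigma$ whose elements share a single nonzero mod-$p$ residue is contained in a listed edge $\{t, t+p\}$; and any $\sigma$ with at least two distinct mod-$p$ residues yields, via a B\'ezout-type argument under the weight bound, a nonnegative combination hitting $0 \pmod{2p}$---except for a small number of ``near-miss'' configurations that, when $s = 3$, are precisely the extra edges $F_a$.

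For part (b) with $s = 2$, the complex $\Gamma_2$ is a disjoint union of simplices, so Corollary \ref{lattice-rank2} applies directly. For $s \in \{1, 3\}$, I would verify that all pairwise intersections of distinct facets are singletons or empty (and higher-order intersections collapse into these), producing a rank-$3$ intersection lattice with four levels: $\hat 0$, the atoms $S_F$, the singleton subspaces $S_{\{v\}}$ arising as pairwise facet intersections, and $\hat 1 = \{0\}$. For part (c) I would then read off $\chi_{\mathcal{A}_s}(x)$ termwise from the lattice: atoms contribute $-x^{|F|}$, each rank-$2$ singleton $S_{\{v\}}$ has a M\"obius value determined by how many atoms contain $v$, and $\mu(\hat 0, \hat 1)$ is forced by the defining recursion. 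The hardest step, I expect, will be the completeness direction of part (a) in the $s = 3$ case: here $\ell = 2p - 3$ is just barely too small for the single-variable reach $\{a_2 d \pmod{2p} : 0 \leq a_2 \leq \ell\}$ to cover the full cyclic subgroup $\langle d \rangle$ when $\gcd(d, 2p) = 1$, and this two-coefficient gap is exactly what produces the $F_a$ edges. Rigorously confirming that the missing coefficients $a_2 \in \{2p-2, 2p-1\}$ yield precisely those pairs---and that any extension of such an edge to three or more elements reintroduces a valid nonneg combination---will require a careful, multi-subcase verification that intertwines the mod-$2$ and mod-$p$ analyses rather than keeping them independent.
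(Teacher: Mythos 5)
Your overall strategy matches the paper's: verify each claimed facet directly (parity for the all-odd set, a shared nonzero mod-$p$ residue for the edges $\{t,t+p\}$, and the coefficient computation forcing $a_2=2p-1$ for the extra edges of $\Gamma_3$), then rule out everything else by a congruence analysis on pairs, and finally read off the intersection lattice and characteristic polynomial from the facet list. The paper organizes the exclusion step as three lemmas about solvability of $r\,i+(2p-s-r)\,j\equiv 0\bmod{2p}$ in a single parameter $r$; since $i-j$ is odd in the relevant cases, your CRT decoupling into a mod-$2$ and a mod-$p$ condition is the same computation in different clothing.

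One concrete error: in your completeness analysis you assert that any $\sigma$ containing $p$ is bad ``by inspection.'' That holds for $s=2$, where $(2p-2)p\equiv 0\bmod{2p}$, but fails for $s\in\{1,3\}$: there $(2p-s)p\equiv p\not\equiv 0\bmod{2p}$, and indeed $p$ is an odd vertex lying inside the facet $\{1,3,\dots,2p-1\}$ whose facehood you established two sentences earlier, so as written the claim contradicts part (a). The fix is routine ($p$ is absorbed into the odd-residue case, and it is adjacent to no nonzero even vertex since no such vertex is $\equiv 0\bmod p$), but it must be made. Separately, the step you explicitly defer --- showing that no face properly contains an edge $\{i,j\}$ with $j$ even, i.e., that pairwise adjacency does not assemble into larger faces (e.g.\ the triple $\{j+p,\,j,\,i\}$ with $j\equiv -2i\bmod{2p}$ in $\Gamma_3$, whose three pairs are all faces) --- is genuinely required; the paper's own argument also only ever analyzes two-element combinations, so your flagging of this is to your credit, but the proof is incomplete until that verification is supplied.
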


\begin{example}

We draw some typical examples of the above theorem: $\Delta_{2p,2p-s}$ for $p=7$ and $s=1,2,3$.

\begin{center}
\begin{tikzpicture}[scale=0.25]
\pgfmathsetlengthmacro{\rad}{70pt};
\pgfmathsetlengthmacro{\vertsize}{\rad*0.1666};
\pgfmathsetlengthmacro{\distance}{1.5*\rad};
\pgfmathsetlengthmacro{\largedist}{5.5*\rad};

\pgfmathsetmacro{\p}{7};
\pgfmathsetmacro{\pp}{\p-1};
\pgfmathsetmacro{\angleSep}{360/\pp};
\pgfmathsetmacro{\angleOff}{\angleSep*0.5};

\pgfmathsetmacro{\triangleopacity}{0.4};
\pgfmathsetmacro{\subfaceopacity}{0.3};
\definecolor{dimtwocolor}{RGB}{140,140,200};
\definecolor{dimhighcolor}{RGB}{70,70,100};
\begin{scope}[shift={(0,0)}]
\begin{scope}[shift={(0*\distance,0)}]
	\coordinate (C) at (\vertsize,1.5*\vertsize);

	\coordinate (I1) at (\angleOff-0*\angleSep:\rad);
	\coordinate (I2) at (\angleOff-1*\angleSep:\rad);
	\coordinate (I3) at (\angleOff-2*\angleSep:\rad);
	\coordinate (I4) at (\angleOff-3*\angleSep:\rad);
	\coordinate (I5) at (\angleOff-4*\angleSep:\rad);
	\coordinate (I6) at (\angleOff-5*\angleSep:\rad);
	\coordinate (O1) at (\angleOff-0*\angleSep:2*\rad);
	\coordinate (O2) at (\angleOff-1*\angleSep:2*\rad);
	\coordinate (O3) at (\angleOff-2*\angleSep:2*\rad);
	\coordinate (O4) at (\angleOff-3*\angleSep:2*\rad);
	\coordinate (O5) at (\angleOff-4*\angleSep:2*\rad);
	\coordinate (O6) at (\angleOff-5*\angleSep:2*\rad);

	\draw [ultra thick, draw=black, fill=dimhighcolor, fill opacity=2*\triangleopacity] (I1) -- (I2) -- (I3) -- (I4) -- (I5) -- (I6)-- cycle;
	\draw [ultra thick, draw=black, opacity=\subfaceopacity] (I1) -- (I3);
	\draw [ultra thick, draw=black, opacity=\subfaceopacity] (I1) -- (I4);
	\draw [ultra thick, draw=black, opacity=\subfaceopacity] (I1) -- (I5);
	\draw [ultra thick, draw=black, opacity=\subfaceopacity] (I1) -- (I6);
	\draw [ultra thick, draw=black, opacity=\subfaceopacity] (I2) -- (I4);
	\draw [ultra thick, draw=black, opacity=\subfaceopacity] (I2) -- (I5);
	\draw [ultra thick, draw=black, opacity=\subfaceopacity] (I2) -- (I6);
	\draw [ultra thick, draw=black, opacity=\subfaceopacity] (I3) -- (I5);
	\draw [ultra thick, draw=black, opacity=\subfaceopacity] (I3) -- (I6);
	\draw [ultra thick, draw=black, opacity=\subfaceopacity] (I4) -- (I6);
	\draw [ultra thick, draw=black, opacity=2*\subfaceopacity] (C) -- (I1);
	\draw [ultra thick, draw=black, opacity=2*\subfaceopacity] (C) -- (I2);
	\draw [ultra thick, draw=black, opacity=2*\subfaceopacity] (C) -- (I3);
	\draw [ultra thick, draw=black, opacity=2*\subfaceopacity] (C) -- (I4);
	\draw [ultra thick, draw=black, opacity=2*\subfaceopacity] (C) -- (I5);
	\draw [ultra thick, draw=black, opacity=2*\subfaceopacity] (C) -- (I6);

	\draw [ultra thick, draw=black] (I1) -- (O1);
	\draw [ultra thick, draw=black] (I2) -- (O2);
	\draw [ultra thick, draw=black] (I3) -- (O3);
	\draw [ultra thick, draw=black] (I4) -- (O4);
	\draw [ultra thick, draw=black] (I5) -- (O5);
	\draw [ultra thick, draw=black] (I6) -- (O6);

	\draw [black, fill=black] (C) circle [radius=\vertsize];
	\draw [black, fill=black] (I1) circle [radius=\vertsize];
	\draw [black, fill=black] (I2) circle [radius=\vertsize];
	\draw [black, fill=black] (I3) circle [radius=\vertsize];
	\draw [black, fill=black] (I4) circle [radius=\vertsize];
	\draw [black, fill=black] (I5) circle [radius=\vertsize];
	\draw [black, fill=black] (I6) circle [radius=\vertsize];
	\draw [black, fill=black] (O1) circle [radius=\vertsize];
	\draw [black, fill=black] (O2) circle [radius=\vertsize];
	\draw [black, fill=black] (O3) circle [radius=\vertsize];
	\draw [black, fill=black] (O4) circle [radius=\vertsize];
	\draw [black, fill=black] (O5) circle [radius=\vertsize];
	\draw [black, fill=black] (O6) circle [radius=\vertsize];

	\node[xshift=0.75*\vertsize, yshift=-0.5*\vertsize] at (I1) {3};
	\node[yshift=-0.95*\vertsize] at (I2) {9};
	\node[xshift=-0.90*\vertsize, yshift=-0.40*\vertsize] at (I3) {13};
	\node[xshift=-0.75*\vertsize, yshift=0.65*\vertsize] at (I4) {11};
	\node[yshift=1*\vertsize] at (I5) {5};
	\node[xshift=0.75*\vertsize, yshift=0.5*\vertsize] at (I6) {1};

	\node[xshift=1.25*\vertsize] at (O1) {10};
	\node[xshift=0.9*\vertsize, yshift=-0.9*\vertsize] at (O2) {2};
	\node[yshift=-1.25*\vertsize] at (O3) {6};
	\node[xshift=-1.25*\vertsize] at (O4) {4};
	\node[xshift=-0.9*\vertsize, yshift=0.9*\vertsize] at (O5) {12};
	\node[yshift=1.15*\vertsize] at (O6) {8};

	\node[xshift=0*\distance,yshift=-.75*\distance] {\LARGE $\Delta_{14,\,13}$};
\end{scope}

\begin{scope}[shift={(6*\distance,0)}]
	\coordinate (C) at (\vertsize,1.5*\vertsize);

	\coordinate (T1) at (-2.5*\rad,0.5*\rad);
	\coordinate (T2) at (-1.5*\rad,0.5*\rad);
	\coordinate (T3) at (-0.5*\rad,0.5*\rad);
	\coordinate (T4) at (2.5*\rad,0.5*\rad);
	\coordinate (T5) at (1.5*\rad,0.5*\rad);
	\coordinate (T6) at (0.5*\rad,0.5*\rad);
	\coordinate (B1) at (-2.5*\rad,-0.5*\rad);
	\coordinate (B2) at (-1.5*\rad,-0.5*\rad);
	\coordinate (B3) at (-0.5*\rad,-0.5*\rad);
	\coordinate (B4) at (2.5*\rad,-0.5*\rad);
	\coordinate (B5) at (1.5*\rad,-0.5*\rad);
	\coordinate (B6) at (0.5*\rad,-0.5*\rad);

	\coordinate (O1) at (-0*\angleSep:2*\rad);
	\coordinate (O2) at (-1*\angleSep:2*\rad);
	\coordinate (O3) at (-2*\angleSep:2*\rad);
	\coordinate (O4) at (-3*\angleSep:2*\rad);
	\coordinate (O5) at (-4*\angleSep:2*\rad);
	\coordinate (O6) at (-5*\angleSep:2*\rad);

	\draw [ultra thick, draw=black] (T1) -- (B1);
	\draw [ultra thick, draw=black] (T2) -- (B2);
	\draw [ultra thick, draw=black] (T3) -- (B3);
	\draw [ultra thick, draw=black] (T4) -- (B4);
	\draw [ultra thick, draw=black] (T5) -- (B5);
	\draw [ultra thick, draw=black] (T6) -- (B6);

	\draw [black, fill=black] (T1) circle [radius=\vertsize];
	\draw [black, fill=black] (T2) circle [radius=\vertsize];
	\draw [black, fill=black] (T3) circle [radius=\vertsize];
	\draw [black, fill=black] (T4) circle [radius=\vertsize];
	\draw [black, fill=black] (T5) circle [radius=\vertsize];
	\draw [black, fill=black] (T6) circle [radius=\vertsize];
	\draw [black, fill=black] (B1) circle [radius=\vertsize];
	\draw [black, fill=black] (B2) circle [radius=\vertsize];
	\draw [black, fill=black] (B3) circle [radius=\vertsize];
	\draw [black, fill=black] (B4) circle [radius=\vertsize];
	\draw [black, fill=black] (B5) circle [radius=\vertsize];
	\draw [black, fill=black] (B6) circle [radius=\vertsize];

	\node[yshift=1*\vertsize] at (T1) {1};
	\node[yshift=1*\vertsize] at (T2) {3};
	\node[yshift=1*\vertsize] at (T3) {5};
	\node[yshift=1*\vertsize] at (T4) {9};
	\node[yshift=1*\vertsize] at (T5) {11};
	\node[yshift=1*\vertsize] at (T6) {13};
	\node[yshift=-1*\vertsize] at (B1) {8};
	\node[yshift=-1*\vertsize] at (B2) {10};
	\node[yshift=-1*\vertsize] at (B3) {12};
	\node[yshift=-1*\vertsize] at (B4) {2};
	\node[yshift=-1*\vertsize] at (B5) {4};
	\node[yshift=-1*\vertsize] at (B6) {6};

	\node[xshift=0*\distance,yshift=-.75*\distance] {\LARGE $\Delta_{14,\,12}$};
\end{scope}

\begin{scope}[shift={(12*\distance,0)}]
	\coordinate (C) at (\vertsize,1.5*\vertsize);

	\coordinate (I1) at (\angleOff-0*\angleSep:\rad);
	\coordinate (I2) at (\angleOff-1*\angleSep:\rad);
	\coordinate (I3) at (\angleOff-2*\angleSep:\rad);
	\coordinate (I4) at (\angleOff-3*\angleSep:\rad);
	\coordinate (I5) at (\angleOff-4*\angleSep:\rad);
	\coordinate (I6) at (\angleOff-5*\angleSep:\rad);
	\coordinate (O1) at (-0*\angleSep:2*\rad);
	\coordinate (O2) at (-1*\angleSep:2*\rad);
	\coordinate (O3) at (-2*\angleSep:2*\rad);
	\coordinate (O4) at (-3*\angleSep:2*\rad);
	\coordinate (O5) at (-4*\angleSep:2*\rad);
	\coordinate (O6) at (-5*\angleSep:2*\rad);

	\draw [ultra thick, draw=black, fill=dimhighcolor, fill opacity=2*\triangleopacity] (I1) -- (I2) -- (I3) -- (I4) -- (I5) -- (I6)-- cycle;
	\draw [ultra thick, draw=black, opacity=\subfaceopacity] (I1) -- (I3);
	\draw [ultra thick, draw=black, opacity=\subfaceopacity] (I1) -- (I4);
	\draw [ultra thick, draw=black, opacity=\subfaceopacity] (I1) -- (I5);
	\draw [ultra thick, draw=black, opacity=\subfaceopacity] (I1) -- (I6);
	\draw [ultra thick, draw=black, opacity=\subfaceopacity] (I2) -- (I4);
	\draw [ultra thick, draw=black, opacity=\subfaceopacity] (I2) -- (I5);
	\draw [ultra thick, draw=black, opacity=\subfaceopacity] (I2) -- (I6);
	\draw [ultra thick, draw=black, opacity=\subfaceopacity] (I3) -- (I5);
	\draw [ultra thick, draw=black, opacity=\subfaceopacity] (I3) -- (I6);
	\draw [ultra thick, draw=black, opacity=\subfaceopacity] (I4) -- (I6);
	\draw [ultra thick, draw=black, opacity=2*\subfaceopacity] (C) -- (I1);
	\draw [ultra thick, draw=black, opacity=2*\subfaceopacity] (C) -- (I2);
	\draw [ultra thick, draw=black, opacity=2*\subfaceopacity] (C) -- (I3);
	\draw [ultra thick, draw=black, opacity=2*\subfaceopacity] (C) -- (I4);
	\draw [ultra thick, draw=black, opacity=2*\subfaceopacity] (C) -- (I5);
	\draw [ultra thick, draw=black, opacity=2*\subfaceopacity] (C) -- (I6);

	\draw [ultra thick, draw=black] (I1) -- (O1);
	\draw [ultra thick, draw=black] (I2) -- (O2);
	\draw [ultra thick, draw=black] (I3) -- (O3);
	\draw [ultra thick, draw=black] (I4) -- (O4);
	\draw [ultra thick, draw=black] (I5) -- (O5);
	\draw [ultra thick, draw=black] (I6) -- (O6);
	\draw [ultra thick, draw=black] (I1) -- (O6);
	\draw [ultra thick, draw=black] (I2) -- (O1);
	\draw [ultra thick, draw=black] (I3) -- (O2);
	\draw [ultra thick, draw=black] (I4) -- (O3);
	\draw [ultra thick, draw=black] (I5) -- (O4);
	\draw [ultra thick, draw=black] (I6) -- (O5);

	\draw [black, fill=black] (C) circle [radius=\vertsize];
	\draw [black, fill=black] (I1) circle [radius=\vertsize];
	\draw [black, fill=black] (I2) circle [radius=\vertsize];
	\draw [black, fill=black] (I3) circle [radius=\vertsize];
	\draw [black, fill=black] (I4) circle [radius=\vertsize];
	\draw [black, fill=black] (I5) circle [radius=\vertsize];
	\draw [black, fill=black] (I6) circle [radius=\vertsize];
	\draw [black, fill=black] (O1) circle [radius=\vertsize];
	\draw [black, fill=black] (O2) circle [radius=\vertsize];
	\draw [black, fill=black] (O3) circle [radius=\vertsize];
	\draw [black, fill=black] (O4) circle [radius=\vertsize];
	\draw [black, fill=black] (O5) circle [radius=\vertsize];
	\draw [black, fill=black] (O6) circle [radius=\vertsize];

	\node[xshift=0.80*\vertsize, yshift=0.5*\vertsize] at (I1) {3};
	\node[xshift=0.8*\vertsize, yshift=-0.5*\vertsize] at (I2) {9};
	\node[yshift=-1.15*\vertsize] at (I3) {13};
	\node[xshift=-1*\vertsize, yshift=-0.5*\vertsize] at (I4) {11};
	\node[xshift=-0.9*\vertsize, yshift=0.6*\vertsize] at (I5) {5};
	\node[yshift=1*\vertsize] at (I6) {1};

	\node[xshift=1.25*\vertsize] at (O1) {10};
	\node[xshift=0.9*\vertsize, yshift=-0.9*\vertsize] at (O2) {2};
	\node[xshift=-0.9*\vertsize, yshift=-0.9*\vertsize] at (O3) {6};
	\node[xshift=-1.05*\vertsize] at (O4) {4};
	\node[xshift=-1*\vertsize, yshift=1*\vertsize] at (O5) {12};
	\node[xshift=0.9*\vertsize, yshift=0.9*\vertsize] at (O6) {8};

	\node[xshift=0*\distance,yshift=-.75*\distance] {\LARGE $\Delta_{14,\,11}$};
\end{scope}
\end{scope}
\end{tikzpicture}
\end{center}
\end{example}


We conclude this section with some avenues for further work. These conjectures were generated by explicitly computing $\Delta_{n,\ell}$ for all $n\leq 19$, using \textbf{Theorem \ref{algorithm}}. The following conjecture partially extends the investigation suggested by \textbf{Theorem \ref{total-armslegs}} to smaller $\ell$:

\begin{conjecture}
For any prime $p$ and even $\ell$ with $p>\ell\geq\frac{p-1}{2}$, the complexes $\Delta_{2p,\ell}$ have no isolated vertices; i.e., they have no facets of dimension 0.
\end{conjecture}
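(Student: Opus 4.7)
The plan is a direct counting argument: for every vertex $v$ of $\Delta_{2p,\ell}$, I bound the number of ``forbidden'' partners $w$ (those for which $\{v,w\}\notin\Delta_{2p,\ell}$) by something strictly less than $|V|-1$, so that $v$ must lie in at least one edge. The lower bound $\ell\geq(p-1)/2$ does not appear to be needed for this approach.

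First I would pin down the vertex set. The singleton $\{v\}$ is a face iff $\ell v\not\equiv 0\pmod{2p}$. Since $\ell$ is even, this fails for $v=0$ and (because $\ell p\equiv 0\pmod{2p}$) for $v=p$; for every other $v\in\{1,\dots,2p-1\}$ we have $\gcd(v,p)=1$, and $0<\ell<p$ forces $p\nmid\ell v$, so $\{v\}$ is a face. Hence the vertex set is $V=\{1,\dots,2p-1\}\setminus\{p\}$, of cardinality $2p-2$.

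Next, fix $v\in V$ and count forbidden partners. A pair $\{v,w\}$ fails to be a face iff $av+bw\equiv 0\pmod{2p}$ for some nonnegative $a,b$ with $a+b=\ell$; the cases $a\in\{0,\ell\}$ are already excluded because $v,w\in V$. For each $1\leq a\leq\ell-1$, the congruence $(\ell-a)w\equiv -av\pmod{2p}$ has at most $\gcd(\ell-a,2p)$ solutions in $\mathbb{Z}/2p\mathbb{Z}$. Using $0<\ell-a<\ell<p$ gives $\gcd(\ell-a,p)=1$, and therefore $\gcd(\ell-a,2p)=\gcd(\ell-a,2)\leq 2$. Summing over $a$ leaves at most $2(\ell-1)$ forbidden partners, so the number of valid partners is at least
$$ (2p-2)-1-2(\ell-1)=2p-2\ell-1\geq 1,$$
since $\ell\leq p-1$ by hypothesis, which finishes the argument.

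The main obstacle I anticipated was whether the crude bound $\gcd(\ell-a,2p)\leq 2$, applied independently across values of $a$ and ignoring coincidences among forbidden residues, would be wasteful enough to fail; but as the computation shows, even the slack bound succeeds. It is worth noting that the argument uses neither $\ell\geq(p-1)/2$ nor, except in ruling out $v=p$, the parity of $\ell$. Hence the conjecture should extend: to handle odd $\ell$ one would only need a separate argument verifying that the extra vertex $v=p$ is not isolated, which amounts to producing a single $w$ such that $ap+bw\not\equiv 0\pmod{2p}$ for all partitions $a+b=\ell$.
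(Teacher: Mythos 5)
The statement you were asked about is left as a conjecture in the paper --- the authors give no proof, only computational evidence --- so there is nothing to compare your argument against; it must stand on its own, and it does. Your counting argument is correct under the paper's stated definition of $\Delta_{n,\ell}$ (coefficients $c_i$ are nonnegative integers summing to exactly $\ell$, which is also the convention used in all of the paper's ``critical equivalence'' arguments): the vertex set is exactly $\{1,\dots,2p-1\}\setminus\{p\}$, each congruence $(\ell-a)w\equiv -av \pmod{2p}$ has at most $\gcd(\ell-a,2p)\leq 2$ solutions since $0<\ell-a<p$, and the resulting bound of at least $2p-2\ell-1\geq 1$ admissible partners is valid (coincidences among forbidden residues, or forbidden residues landing outside $V\setminus\{v\}$, only help you). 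You are also right that the hypothesis $\ell\geq\frac{p-1}{2}$ is never used, so you have in fact proved a strictly stronger statement than the conjecture. One caveat worth flagging: the conjecture was generated from data produced by the paper's Theorem~2.1, whose step (1) uses partitions into \emph{at most} $\ell$ parts, i.e., it forbids any nonempty zero-sum sub-multiset of size up to $\ell$ rather than of size exactly $\ell$. Under that alternative reading the forbidden count would be roughly $\sum_{j\leq\ell}2(j-1)$, and your bound would no longer close; so your proof resolves the conjecture for the complex as formally defined, but if the authors' computations reflect the ``at most $\ell$'' convention, the statement they actually observed may require a finer argument.
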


It is known, for instance by \cite[Theorem F.6]{menu-research-problems}, that all of these complexes are graphs. Therefore, this conjecture shows that all of their facets have dimension one. In general, a complex whose facets all have the same dimension is called \textbf{pure}. 

\begin{conjecture}
For odd $n$ each of the complexes $\Delta_{n,\frac{n-1}{2}}$ and $\Delta_{n,\frac{n+1}{2}}$ is pure if and only if $n$ is prime.
\end{conjecture}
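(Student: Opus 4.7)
The plan is to treat the two directions separately.

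For the forward direction (prime implies pure), the key tool is the Cauchy--Davenport inequality in the field $\mathbb{Z}/p\mathbb{Z}$. Iterating it yields $|\ell A| \geq \min(2\ell+1, p)$ for any $3$-element $A \subseteq \mathbb{Z}/p\mathbb{Z}$, where $\ell A$ denotes the $\ell$-fold sumset $A + A + \cdots + A$. For $\ell \in \{(p-1)/2, (p+1)/2\}$ this minimum equals $p$, so $\ell A = \mathbb{Z}/p\mathbb{Z}$ and in particular $0 \in \ell A$; hence no $3$-element subset is $\ell$-zero-sumfree and $\dim \Delta_{p,\ell} \leq 1$. To avoid isolated vertices, observe that for each $s \neq 0$ the forbidden partners $t$ number at most $\ell + 1$ (namely, the values $-c_1 s / c_2$ for $c_1 + c_2 = \ell$ with $c_1, c_2 > 0$, together with $\{0, s\}$), and $\ell + 1 < p$ except in the single corner case $p = 3, \ell = 2$. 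In that corner one checks directly that no edge exists, so the singletons $\{1\}, \{2\}$ are facets of dimension $0$; in all other cases the complex is pure of dimension $1$.

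For the backward direction (composite implies non-pure), the idea is to exhibit two facets of distinct dimensions. For each prime divisor $r$ of $n$, consider $A_r = \{a \in \mathbb{Z}/n\mathbb{Z} : a \equiv 1 \pmod r\}$, of size $n/r$. Any $\ell$-combination $\sum c_i a_i$ with $a_i \in A_r$ reduces to $\ell \pmod r$, which is nonzero for odd $r$ and $\ell \in \{(n-1)/2, (n+1)/2\}$, so $A_r$ is $\ell$-zero-sumfree. It is moreover a facet: for $t \not\equiv 1 \pmod r$, the unique $c_t^* \in \{1, \ldots, r-1\}$ satisfying $c_t^*(t - 1) \equiv -\ell \pmod r$ kills the mod-$r$ obstruction, and the remaining weight $\ell - c_t^* \geq 1$ can then be distributed over $A_r = \{1 + kr\}_{k=0}^{n/r - 1}$ to realize any prescribed offset modulo $n/r$, producing a bad combination. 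If $n$ has distinct prime divisors $p < q$, then $A_p$ and $A_q$ are facets of dimensions $n/p - 1 \neq n/q - 1$, finishing this subcase.

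The remaining case is $n = p^e$ with $e \geq 2$, where only one residue-class facet $A_p$ is available (of dimension $p^{e-1} - 1 \geq 2$). I would pair it with an edge facet $\{v, k\}$ through the vertex $v = p$: existence of some such edge follows from a parameter count analogous to the prime case. The crux is to show no triangle $\{v, s, t\}$ is a face. If $p$ divides $s$ or $t$, the corresponding pair $\{v, s\}$ or $\{v, t\}$ is already not an edge, by a direct solvability check for $c_1 p + c_2 (pk') \equiv 0 \pmod{p^e}$ with $c_1 + c_2 = \ell$. If $s, t$ are coprime to $p$ with $s \equiv t \pmod p$, writing $t = s + pm$ reduces the zero-sum equation to a congruence $c_2 m \equiv -\ell - j(s-p) \pmod{p^{e-1}}$ which is solvable for some $j$ in the allowed range by a case split on $\gcd(m, p^{e-1})$. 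If $s \not\equiv t \pmod p$, Cauchy--Davenport in $\mathbb{Z}/p\mathbb{Z}$ provides $(c_1, c_2)$ killing the mod-$p$ obstruction, after which varying the weight $c_0$ on $v$ shifts the resulting multiple of $p$ to hit $0$ modulo $p^e$.

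I expect the hardest step to be this last lifting. The valid triples with sum $\equiv 0 \pmod p$ yield, for each fixed $c_0$, an arithmetic progression of sums inside $p \mathbb{Z}/p^e\mathbb{Z}$ with common difference $p(s-t)$ (a generator of that subgroup), and one must argue that the union of these progressions over valid $c_0$ contains $0$. A case split by the $p$-adic valuation of $s - t$ together with a pigeonhole estimate on how the shifted progressions cover the target should suffice.
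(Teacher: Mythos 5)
This statement is one of the paper's \emph{conjectures}: the paper offers no proof of it, so there is nothing to compare your argument against, and a correct proof would be new content. Much of what you propose does work. The forward direction via iterated Cauchy--Davenport is sound: for $|A|=3$ one gets $|\ell A|\geq\min(2\ell+1,p)=p$ when $\ell\geq\frac{p-1}{2}$, so $0\in\ell A$ and $\dim\Delta_{p,\ell}\leq 1$, and your count of forbidden partners correctly isolates $(p,\ell)=(3,2)$ as the only case without edges (still pure, of dimension $0$). The composite case with two distinct prime divisors is also fine: $\ell=\frac{n\pm1}{2}\equiv\pm 2^{-1}\not\equiv 0\pmod r$ for each odd prime $r\mid n$, so each $A_r$ is a face, and your two-step argument (solve for $c_t^*$ mod $r$, then use one unit of the remaining weight $\ell-c_t^*\geq 1$ to hit any residue mod $n/r$) correctly shows each $A_r$ is a facet; distinct primes give facets of distinct dimensions.

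The gap is in the prime-power case, and it is fatal for $e\geq 3$. Take $n=27$, $\ell=13$, $v=p=3$. Your sub-claim that $\{p,pk'\}$ is never an edge fails: for any $c_1+c_2=13$ one has $3c_1+12c_2=3(13+3c_2)\equiv 12+9c_2\pmod{27}$, which is never $0$ since $9c_2\bmod 27\in\{0,9,18\}$; so $\{3,12\}$ is a face. Worse, your crux claim that no triangle through $v$ is a face also fails: every weighted sum over $\{3,12,21\}$ with weights summing to $13$ is $\equiv 3\cdot 13\equiv 3\pmod 9$, hence nonzero mod $27$, so $\{3,12,21\}$ is a $2$-face containing $p$. (The same computation works for $\ell=14$.) More generally the coset $\{x\equiv p\pmod{p^2}\}$ is a face of size $p^{e-2}$ containing $p$, so for $e\geq 3$ the strategy of exhibiting an \emph{edge} facet through $p$ and ruling out all triangles through $p$ cannot be repaired; you would instead need to identify a genuinely different low-dimensional facet --- the natural candidates are the cosets $\{x\equiv ip^{j-1}\pmod{p^j}\}$ of size $p^{e-j}$, mirroring the decomposition in Theorem \ref{total-primepowers}(a), and you would need to prove that at least two of them (of different sizes) are maximal. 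Your $e=2$ reasoning appears consistent with the $n=9$ data, but as written the argument does not establish the composite direction for all odd composite $n$.
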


Surprisingly, the $h$-vector---which in principle records nothing at all about facets---seems to contain a sufficient condition for purity for $\simpc$:

\begin{conjecture}
The complex $\simpc$ is pure if $h_i\geq 0$ for all $i=1,\ldots ,d$.
\end{conjecture}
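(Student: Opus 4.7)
My plan is to proceed by contrapositive: assuming $\simpc$ is not pure, exhibit some index $i$ with $h_i < 0$. Since this conjecture is based on computational evidence only up to $n\leq 19$, I would begin by extending those computations, paying particular attention to low-dimensional non-pure examples; for $\dim\simpc = 1$ the hypothesis reduces to the single inequality $h_1 \geq 0$, which tends to be easy to satisfy, so this regime is the most likely source either of a refutation or of an indication that the conjecture must be sharpened.

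The cleanest subcase where the implication can be proved outright is when $\simpc$ is a disjoint union of simplices, which we already understand via Proposition \ref{h-vec-distinct-simplices}. That proposition writes
$$ (-1)^{k-1}h_k = \sum_{m=1}^{\ell-k+1}\binom{\ell-m}{k-1}(\mu_m-1) $$
as a nonnegative sum of nonnegative terms, so the sign of $h_k$ is forced to be $(-1)^{k-1}$; in particular $h_k \leq 0$ for every even $k \geq 2$. Thus whenever $\dim\simpc \geq 2$, the hypothesis $h_2 \geq 0$ combined with $h_2 \leq 0$ forces each summand of the $h_2$-formula to vanish, i.e.\ $\mu_m = 1$ for all $1 \leq m \leq \ell-1$, so $\l$ has a single nonzero part and $\simpc$ is a single simplex (trivially pure). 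This argument already verifies the conjecture for Theorem \ref{total-primepowers} and for the disjoint-union portions of Theorems \ref{total-doubling} and \ref{total-armslegs}.

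For a general $\simpc$ I would exploit the Alexander-dual description $\simpc = \langle\text{NLC}\rangle_c^\vee$ from Theorem \ref{algorithm}: classical relations between the $f$- and $h$-vectors under Alexander duality translate positivity of $h(\simpc)$ into a combinatorial condition on $\langle\text{NLC}\rangle_c$, and ultimately on the set of $(n,\ell)$-congruent partitions, which one might hope to pin down by elementary number-theoretic arguments. A complementary tool is the natural action of $(\mathbb{Z}/n\mathbb{Z})^\times$ by multiplication, which permutes the facets of $\simpc$ while preserving both dimension and the zero-sumfree condition, and so constrains how a non-pure $\simpc$ can arise. The principal obstacle, and presumably the reason this remains a conjecture, is the absence of any general structural description of $\simpc$ outside of the specific families handled in this paper; I expect a successful proof will proceed by first showing that $h_i \geq 0$ forces $\simpc$ to be sequentially Cohen-Macaulay or pure shellable, from which purity follows automatically, and establishing such a structural property without an explicit classification of $\simpc$ is where the real work will lie.
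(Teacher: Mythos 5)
The statement you were asked to prove is one of the paper's open conjectures: the authors offer no proof, only computational evidence for $n\leq 19$, so there is nothing in the paper to compare your argument against. Judged on its own terms, your proposal is a research plan rather than a proof, and you concede as much in your final sentence. The one piece that is a complete argument is the disjoint-union-of-simplices case: since Proposition \ref{h-vec-distinct-simplices} exhibits $(-1)^{k-1}h_k$ as a sum of nonnegative terms, $h_2\leq 0$ holds automatically, and when $d\geq 2$ the hypothesis $h_2\geq 0$ forces the $m=1$ summand $(\ell-1)(\mu_1-1)$ to vanish, so $\lambda$ has a single part and the complex is one simplex, trivially pure. That computation is correct, and it does verify the conjecture for the families of Theorems \ref{total-doubling} and \ref{total-primepowers}. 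Your observation that in dimension one the hypothesis degenerates to $f_0\geq 2$, so the conjecture secretly asserts that no one-dimensional $\simpc$ has an isolated vertex, is also a genuinely useful sanity check: it ties this conjecture to the paper's separate no-isolated-vertices conjecture and correctly identifies where a counterexample would most plausibly live.

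The gap is everything outside that special case. The Alexander-duality translation, the multiplicative action of the units of $\mathbb{Z}/n\mathbb{Z}$, and the hoped-for reduction to purity via a shellability or Cohen--Macaulay-type property are all named but not carried out, and none is obviously within reach: nonnegativity of the $h$-vector does not imply purity for arbitrary simplicial complexes (a cone over any non-pure complex has nonnegative $h$-vector), so any successful argument must use arithmetic structure specific to $\simpc$, and you have not identified what that input would be. Note also that Theorems \ref{total-doubling} and \ref{total-armslegs} (for $s\in\{1,3\}$) produce complexes whose facets share vertices, so your disjoint-union lemma does not cover even all the families classified in this paper. Your special-case computation would make a worthwhile remark in support of the conjecture, but the conjecture itself remains open.
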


In a different direction, a complex $\Gamma$ \textbf{connected} if for any two vertices $v$ and $w$, there is a sequence of vertices $(p_0=v,p_1,p_2,\dots p_{k-1}, w=p_{k})$ that forms a path from $v$ to $w$; i.e. such that $p_0=v$,$p_{k}=w$, and $\{p_i,p_{i+1}\}\in\Gamma$ for every $0\leq i\leq k-1$. We conjecture that a significant portion of all $\Delta_{n,\ell}$ are connected:

\begin{conjecture}
For any $n>2\ell$, the complex $\simpc$ is connected.
\end{conjecture}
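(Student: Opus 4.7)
The plan is to show the 1-skeleton of $\simpc$ is connected, which is equivalent to the complex being connected. The vertex set is $V = \{v \in \mathbb{Z}/n\mathbb{Z} : \ell v \not\equiv 0 \pmod n\}$, and for vertices $v \ne w$, the pair $\{v,w\}$ is a non-edge precisely when there exists $k \in \{1,\ldots,\ell-1\}$ with $kv + (\ell-k)w \equiv 0 \pmod n$ (the boundary values $k=0,\ell$ being automatic for vertices). The argument will be anchored by a distinguished face: setting $m := \lfloor (n-1)/\ell \rfloor$, the set $A := \{1, 2, \ldots, m\}$ is a face of $\simpc$, because any nonnegative-integer combination $\sum c_i a_i$ with $\sum c_i = \ell$ and $a_i \in A$ sums to an integer in $[\ell, m\ell] \subseteq [1, n-1]$, hence is nonzero modulo $n$. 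The hypothesis $n > 2\ell$ guarantees $m \ge 2$, so $A$ is a clique of size at least $2$ in the 1-skeleton; the strategy is then to show that every vertex of $V$ lies within bounded distance of $A$.

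For the case where $n$ is prime, a clean counting argument should give diameter $\le 2$. For each vertex $v$, the set $B_v$ of non-neighbors satisfies $|B_v| \le \ell-1$: each equation $(\ell-k)w \equiv -kv \pmod n$ with $k \in \{1,\ldots,\ell-1\}$ has a unique solution, since $\ell-k < n$ is invertible. Then if two vertices $v_1, v_2$ were non-adjacent with disjoint neighborhoods, the disjoint union $\{v_1,v_2\} \sqcup N(v_1) \sqcup N(v_2) \subseteq V$ would force $2 + 2(n-\ell-1) \le n-1$, i.e. $n \le 2\ell-1$, contradicting $n > 2\ell$. So any two vertices share a common neighbor in this case.

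For composite $n$ the bound on $|B_v|$ can be much larger, and diameter $\le 2$ can genuinely fail: for example, the vertices $3$ and $6$ of $\Delta_{9,4}$ are at distance exactly $3$. Here the plan is to bring in the full family of unit-translate cliques $\{cA : c \in (\mathbb{Z}/n\mathbb{Z})^\times\}$---each a face of $\simpc$, since multiplication by a unit is an automorphism of the complex---and to show these cliques collectively cover $V$ and are pairwise interconnected by edges or short common-neighbor bridges. I expect the main obstacle to be vertices $v$ whose unit-orbit is small (equivalently, $\gcd(v,n)$ is large, though still with $\ell v \not\equiv 0$): such $v$ lie in few of the cliques $cA$, and producing explicit short paths from $v$ into the family requires a delicate case analysis on how the divisor structure of $n$, $v$, and $\ell$ interact, quantitatively exploiting the slack from $n > 2\ell$. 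Settling this composite-$n$ regime is the main technical challenge of the full proof.
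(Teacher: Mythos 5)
This statement appears in the paper only as a conjecture: the authors offer no proof, just computational evidence from all $n\leq 19$, so your attempt must stand entirely on its own. It does not. You are candid that the composite-$n$ regime is unresolved, and that is precisely where all the difficulty lives, so what you have is a partial result, not a proof. That said, the prime case is correct and worth keeping: the vertex set is all of $\{1,\dots,n-1\}$, each vertex has at most $\ell-1$ non-neighbors because each $k\in\{1,\dots,\ell-1\}$ contributes a unique forbidden $w$, and the disjointness count $2+2(n-\ell-1)\leq n-1$ cleanly contradicts $n>2\ell$; your example $\Delta_{9,4}$, where $3$ and $6$ are at distance exactly $3$, correctly shows that this diameter-$2$ argument cannot extend verbatim.

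For composite $n$ the sketched plan has a concrete flaw beyond mere incompleteness. Every element of a clique $cA=\{c,2c,\dots,mc\}$ with $c$ a unit satisfies $\gcd(v,n)=\gcd(a,n)$ for some $a\leq m$, so the family $\{cA\}$ can miss vertices outright rather than merely containing them in ``few'' cliques. In your own example $\Delta_{9,4}$ one has $m=2$ and $A=\{1,2\}$, so every $cA$ consists of two units mod $9$, and the vertices $3$ and $6$ lie in \emph{none} of the cliques; the claim that the cliques ``collectively cover $V$'' is false there. Any completion therefore needs a separate mechanism for attaching vertices of large $\gcd(v,n)$ to the unit part of the graph, and the paper's own Theorem \ref{total-doubling} is a warning about how delicate the boundary is: at $n=2\ell$ the complex decomposes into $2^m$ disjoint simplices, so the connecting edges you need for $n>2\ell$ are exactly the ones that disappear at $n=2\ell$. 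Until that case analysis is actually carried out, the conjecture remains open.
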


Note that \textbf{Theorem \ref{total-doubling}} shows that the inequality is sharp, in the sense that $\Delta_{2\ell,\ell}$ is connected if and only if $\ell$ is odd. However, if $n$ is odd, the data suggests that there may be some weaker bound on $\ell$; in particular, it appears that $\Delta_{n,\frac{n+1}{2}}$ is connected for $n\geq 7$.

The topologically sophisticated reader will be aware that a complex is connected if and only if its zeroth homology group vanishes. The small examples we computed suggest that higher homology groups also vanish when $n\gg h$, but the evidence is too weak to give a more quantitative estimate.

\section{Proofs of Theorems}
\label{all-the-proofs}

Recall that an integer $N$ is called even if there is another integer $M$ such that $N=2M$, and odd otherwise. In the proofs below, we will be performing modular arithmetic but it will be helpful to have these words available. For a prime $p$, we say that $N\in\mathbb{Z}/n\mathbb{Z}$ is \textbf{divisible by $p$} if $N=pM$ for some $M\in\mathbb{Z}/n\mathbb{Z}$. If $N$ is divisible by $2$, then it is called \textbf{even}, and if not, it is called \textbf{odd}. If $p$ does not divide $n$ (as an integer), then no elements divisible by $p$, but we will only be concerned with the case when $p|n$, where these definitions are more intuitive.

We say that two vertices $v$ and $w$ are \textbf{adjacent} in a simplicial complex $\Gamma$ if $\{v,w\}\in\Gamma$. Thus, when $\Gamma=\Delta_{n,\ell}$, the vertices $v$ and $w$ are adjacent if and only if there is no solution $0\leq r\leq \ell$ to the following \textbf{critical equivalence}:
$$ rv + (\ell-r)w \equiv 0 \bmod{n}. $$ 
Note that this is not a purely number-theoretic condition: it is possible that the only solutions to the critical equivalence fall outside the range $[0,\ell]$. We call such solutions \textbf{invalid}. However, notice that if $\ell=n-1$, then validity means that $r$ must fall inside the range $[0,n-1]$, so any solution to the equivalence is equivalent modulo $n$ to a valid solution.

When checking for solutions, instead of solving the critical equivalence mod $n$ we usually will observe that the left-hand side has some factor in common with $n$, say $g$, and we will divide through everything by $g$. In general, we may write that if we have a solution $r$ to an equivalence modulo $n/g$, e.g. $r \equiv f \bmod{n/g}$, this means that $r=f+\beta(n/g)$ for some integer $\beta$. Therefore, $gr=gf+\beta n$, and so there is still a solution to the equivalence $gr\equiv gf \bmod{n}$.

Moreover, since $r$ is a solution to the equivalence modulo $n/g$, there is at least one solution to the critical equivalence (modulo $n$) in the much smaller interval $[0,n/g-1]$; and in our proofs this will usually give a solution in $[0,\ell]$.

\subsection*{Proof of Theorem \ref{total-doubling}}

\doubling*


First, note that the vertex set $V$ of $\Delta_{2\ell,\ell}$ contains precisely the odd elements of $\mathbb{Z}/2\ell\mathbb{Z}$, since if $b=2c$ is even, then $b+\cdots+b=\ell(2c)\equiv 0 \bmod{2\ell}$. Therefore, every $v\in V$ is in some $V_t$.

For part (a), begin by observing that for any $t$ and any choice of $\ell$ vertices from $V_{t}$:
$$2^{m + 1}k_1 +2t+1,\quad 2^{m + 1}k_2 +2t+1, \quad\cdots,\quad 2^{m+1}k_{\ell} +2t+1$$
(where the $k_i$ are not necessarily distinct), their sum is
$$
\sum_{i = 1}^{\ell}(2^{m+1}k_i + 2t+1) = 
\left(\sum_{i=1}^{\ell}2^{m+1}k_i\right) + (2t+1)\ell.
$$
Now suppose for the sake of contradiction that the above sum is equivalent to zero modulo $2\ell.$ Therefore, since $\ell=2^m\rho$,
\begin{align*}
(2t+1)\ell + \sum_{i=1}^{\ell}2^{m+1}k_i &\equiv 0 &&\bmod{2\ell} \\
2^m\left( (2t+1)\rho + \sum_{i=1}^{\ell}2k_i \right) &\equiv 0 &&\bmod{2\ell} \\
(2t+1)\rho +\sum_{i=1}^{\ell}2k_i &\equiv 0 &&\bmod{2\rho}.
\end{align*}
Since $\rho$ is odd, the left-hand side is odd, but this clearly contradicts that zero is even.
implying that no sum of elements in $V_t$ evaluates to zero modulo $2\ell$, proving that $V_t$ is  a $(\rho-1)$-dimensional face of $\Delta_{2\ell,\ell}$.

It thus remains to show that if $v\in V_{t_1}$ and $w\in V_{t_2}$ for $t_1\neq t_2$ then $v$ and $w$ are not adjacent. That is, we need to find a valid solution $r$ to the critical equivalence:
\begin{align*}r\cdot(2^{m+1}k_1+2t_1 + 1) + (\rho\cdot{2^m}-r)\cdot({2^{m+1}}k_2+ 2t_2+1) &\equiv 0 \bmod{n}.
\end{align*}
Routine algebraic manipulation on this equivalence yields
\begin{align*}
 2r(t_1 - t_2 + 2^{m}(k_1-k_2)) + \rho\cdot{2^{m}} &\equiv 0&&\bmod{\rho\cdot2^{m+1}} \\
r(t_1 - t_2 + 2^{m}(k_1-k_2)) &\equiv -\rho\cdot{2^{m-1}}&&\bmod{\rho\cdot2^{m}} \\
r(\tau + 2^{m-e}(k_1-k_2)) &\equiv{-\rho\cdot{2^{m-1-e}}}&&\bmod{\rho\cdot2^{m-e}},
\end{align*}
where $t_1-t_2 = \tau 2^e$ for some odd number $\tau$ and $0\leq{e}\leq{m-2}$. 

Since $\tau + 2^{m-e}(k_1-k_2)$ is odd, we have
$$\gcd(\tau + 2^{m-e}(k_1-k_2), \rho\cdot2^{m-e}) = \gcd(\tau + 2^{m-e}(k_1-k_2),\rho)$$
Therefore, letting $g=\gcd(\tau + 2^{m-e}(k_1-k_2),\rho)$, we have that

$$s=\frac{\tau + 2^{m-e}(k_1-k_2)}{g} \quad\text{and}\quad \rho'=\frac{\rho}{g}$$
are both odd integers with $\gcd(s,\rho') = 1$. Since $s$ is odd, we thus conclude that $s$ is invertible modulo $\rho'\cdot{2^{m-e}}$. Thus, continuing to simplify the critical equivalence, we find
\begin{align*}
r(gs)&\equiv{-g\rho'\cdot{2^{m-1-e}}}&&\bmod{g\rho'\cdot{2^{m-e}}}\\
rs&\equiv-\rho'2^{m-1-e}&&\bmod{\rho'\cdot2^{m-e}}\\
r&\equiv \frac{-\rho'2^{m-1-e}}{s}&&\bmod{\rho'\cdot2^{m-e}}.
\end{align*}
and thus the critical equivalence has a solution modulo $\rho'2^{m-e}$ Thus, there is a solution $r$ to the critical equivalence, and it may be chosen such that $0\leq r\leq \rho' 2^{m-e}\leq \rho 2^m\leq  \ell$.

We have thus shown that any two vertices $v\in V_{t_1}$ and $w\in V_{t_2}$ are not adjacent for any $t_1$ and $t_2$, and hence that $\Gamma=\Delta_0\cup\cdots\cup\Delta_{2^{m-1}}$, concluding the proof of part (a).

For part (b), let $\inlatA$ be the corresponding intersection poset of subspaces. Since $\Gamma$ on a vertex set $V$ is a collection of $2^m$ many $(\rho-1)$-simplices, then by \textbf{Proposition \ref{max-chain-union}}, $\inlatA$ is graded of rank $2$. 

Finally, for part (c) notice that $|V|=\ell$ and each facet has dimension $d_i=\rho-1$. So, by \textbf{Corollary \ref{char-poly-union-eqdim}}, we have $\chi_\mathcal{A}(x)=x^\ell-2^mx^\rho+(2^m-1).$


\begin{corollary}
If $n=\rho\cdot 2^{m+1}, \ell=\rho\cdot 2^{m}$ for all $m\geq{0}$ and for some odd number $\rho$, then $\simpc$ is a pure simplicial complex.
\end{corollary}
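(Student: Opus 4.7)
The plan is to deduce this corollary as an immediate consequence of part~(a) of Theorem~\ref{total-doubling}. The hypotheses $n = \rho \cdot 2^{m+1}$ and $\ell = \rho \cdot 2^m$ imply $n = 2\ell$, so the theorem applies, and part~(a) identifies $\Delta_{2\ell,\ell}$ as a disjoint union of $(\rho-1)$-simplices on pairwise disjoint vertex sets $V_0, V_1, \ldots, V_{2^m-1}$.

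Because the vertex sets $V_t$ are pairwise disjoint, no face of $\Delta_{2\ell,\ell}$ can contain vertices drawn from more than one residue class $V_t$ (any such set is by construction not a face of any $\Delta_t$, and there are no other facets). Hence the facets of $\Delta_{2\ell,\ell}$ are precisely the sets $V_t$ themselves, each of size $\rho$ and therefore of dimension $\rho - 1$. Every facet thus has the same dimension $\rho - 1$, which is exactly the definition of a pure complex.

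There is no genuine obstacle here: the corollary amounts to unpacking the statement of Theorem~\ref{total-doubling}(a), observing that a disjoint union of simplices of equal dimension is automatically pure, and noting that the hypothesis $m \geq 1$ simply ensures the decomposition involves more than one simplex (for $m = 0$ the claim is the trivial purity of a single simplex, and in any case the argument goes through unchanged).
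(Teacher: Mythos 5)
Your proof is correct and follows the same route as the paper: cite Theorem~\ref{total-doubling}(a) to identify $\Delta_{2\ell,\ell}$ as a disjoint union of $(\rho-1)$-simplices and conclude purity because all facets (the sets $V_t$) share the dimension $\rho-1$. If anything, your version is slightly more careful than the paper's one-line justification (``since every simplex is pure''), since purity of a disjoint union really rests on the simplices having \emph{equal} dimension, which you state explicitly.
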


\begin{proof}
By \textbf{Theorem \ref{total-doubling}(\ref{doubling})}, $\Delta_{2^{m+1}\rho,2^m\rho}$ contains $2^m\rho$ disjoint $(\rho-1)$-simplices. Since every simplex is pure, $\Delta_{2^{m+1}\rho,2^m\rho}$ is pure.
\end{proof}

\subsection*{Proof of Theorem \ref{total-primepowers}}

\primepowers*


\begin{proof}
For part (a), begin by partitioning the vertex set of $\Delta_{p^e,p^e-1}$ into $e\cdot(p-1)$ disjoint sets, denoted $V_{i,j}$ for $1\leq i\leq p-1$ and $1\leq j\leq e$, where 
    $$V_{i,j} = \left\{x\in{V}: x\equiv i\cdot{p^{j-1}}\bmod{p^{j}},1\leq{i}\leq{p-1}\right\}.$$
We can more easily understand the structure of $\Delta_{p^e,p^e-1}$ by using the following table:
    \begin{center}
    \begin{tabular}{c|c|c|c|c|c|c}
    $\bm{\bmod{\hspace{0.1cm}p^1}}$ & $p^{0}\cdot1$&$p^0\cdot{2}$&$p^0\cdot{3}$&\ldots&$p^0\cdot{(p-2)}$&$p^0\cdot{(p-1)}$\\\hline
    $\bm{\bmod{\hspace{0.1cm}p^2}}$& $p^1\cdot{1}$&$p^1\cdot{2}$&$p^1\cdot{3}$&\ldots&$p^1\cdot{(p-2)}$&$p^1\cdot{(p-1)}$\\\hline
    $\bm{\bmod{\hspace{0.1cm}p^3}}$& $p^2\cdot{1}$&$p^2\cdot{2}$&$p^2\cdot{3}$&\ldots&$p^2\cdot{(p-2)}$&$p^2\cdot{(p-1)}$\\\hline
    $\vdots$&$\vdots$&$\vdots$&$\vdots$&$\ddots$&$\vdots$&$\vdots$\\\hline
    $\bm{\bmod{\hspace{0.1cm}p^{e-1}}}$& $p^{e-2}\cdot{1}$&$p^{e-2}\cdot{2}$&$p^{e-2}\cdot{3}$&\ldots&$p^{e-2}\cdot{(p-2)}$&$p^{e-2}\cdot{(p-1)}$\\\hline
    $\bm{\bmod{\hspace{0.1cm}p^{e}}}$& $p^{e-1}\cdot{1}$&$p^{e-1}\cdot{2}$&$p^{e-1}\cdot{3}$&\ldots&$p^{e-1}\cdot{(p-2)}$&$p^{e-1}\cdot{(p-1)}$
    \end{tabular}
    \end{center}
Note that each above cell represents the set of all vertices in the vertex set of $\Delta_{p^e,p^e-1}$ that are equivalent to the quantity labelling the cell modulo the bolded value labelling the corresponding row. For example, in the first row, the set contained in the third cell labelled ``$p^0\cdot{2}$'' is the set $\left\{x\in{V}: x\equiv{p^0\cdot{2}}\bmod{p^1}\right\}$ where $V$ is the vertex set of $\Delta_{p^e,p^e-1}$.

To prove that each $V_{i,j}$ is a simplex, observe that for any choice of $\ell=p^e-1$  vertices from $V_{i,j}$:
    $$ t_1p^j +i\cdot{p^{j-1}}, \quad  t_2p^{j} + i\cdot{p^{j-1}}, \quad\cdots\quad t_\ell p^j+i\cdot{p^{j-1}},$$ 
(where the $t_i$ are not necessarily distinct) their sum is
\begin{align*}
    \sum_{k=0}^{\ell}\left(t_kp^j + p^{j-1}\cdot{i}\right) 
    &=\left(p^e-1\right)p^{j-1}\cdot{i} + p^j\sum_{k=0}^{\ell}t_k \\
    &\equiv -p^{j-1}\cdot{i} + p^j\sum_{k=0}^{\ell}t_k&&\bmod{p^e} \\
    & \equiv -i+p\sum_{k=0}^{\ell}t_k &&\bmod{p^{e-(j-1)}}.
\end{align*}
Since $-i$ is not divisible by $p$, this sum is nonzero. Thus no sum of $p^e-1$ elements is zero modulo $n=p^e$; that is, $V_{i,j}$ is a $(p^{e-j}-1)$-dimensional face of $\Delta_{p^e,p^e-1}$.
 
It thus remains to show that if $v\in V_{i,j}$ and $w\in V_{i',j'}$ then $v$ and $w$ are not adjacent for any $i\neq i'$ or $j\neq j'$. That is, we need to find a solution $r$ to the critical equivalence:
    $$r\cdot(i + pt_1)p^{j-1} + (p^e - 1 - r)\cdot(i' + pt_2)p^{j'-1}\equiv{0}\bmod{p^e}.$$

Recall that any solution suffices, since $\ell=n-1$. Write $i' = i + \eta$ and that $j' = j + \varepsilon$, so that $\varepsilon=0$ if and only if $i=i'$, and $\eta=0$ if and only if $j=j'$. In particular, at least one of $\eta$ and $\varepsilon$ must be nonzero, by hypothesis. Assume without loss of generality that $\eta\geq 0$ (that is, $j'\geq j$). Then the critical equivalence becomes
\begin{align*}
    r\cdot(i + pt_1)p^{j-1} + (p^e - 1 - r)\cdot(i + \eta + pt_2)p^{j+\varepsilon-1} &\equiv{0}\bmod{p^e}  \\
    r(i+pt_1-p^{\varepsilon}(i+\eta+pt_2)) - (i+\eta+pt_2) &\equiv 0\bmod{p^{e-(j-1)}}.
\end{align*}

In the general case when $\varepsilon\neq 0$, we have that $i+pt_1-p^{\varepsilon}(i+\eta+pt_2)$ is invertible modulo $p^{e-j+1}$, since $1\leq i\leq p-1$ and so $\gcd(i,p)=1$. Thus the critical equivalence has a solution modulo $p^{e-j+1}$, namely
    $$r\equiv\frac{i + pt_1}{p^{\varepsilon}(i + \eta + pt_2) - i - pt_2}\bmod{p^{e-j+1}}.$$
In the exceptional case when $\varepsilon=0$, we can solve the critical equivalence in a similar way, this time using $\gcd(\eta,p)=1$:
\begin{align*}
    r(i+pt_1-(i+\eta+pt_2)) - (i+\eta+pt_2) &\equiv 0\bmod{p^{e-(j-1)}} \\
    r(\eta+p(t_1-t_2)) - (i+\eta+pt_2) &\equiv 0 \bmod{p^{e-(j-1)}}
\end{align*}
    $$r \equiv\frac{i+\eta+pt_2}{\eta+p(t_1-t_2)} \bmod{p^{e-j+1}}.$$
In either case, the critical equivalence has a solution modulo $p^{e-j+1}$, and hence a valid solution modulo $n=p^e$. This shows that $\Delta_{p^e,p^e-1}$ is a disjoint union of the simplices on vertex sets $V_{i,j}$, and thus concludes the proof of part (a).
    
For part (b), let $\inlatA$ be the corresponding intersection poset of subspaces. Since $\Delta_{p^e,p^e-1}$ is a collection of $e(p-1)$ many $(p^j-1)$-simplices for each $0\leq j\leq e-1$, then by \textbf{Proposition \ref{max-chain-union}}, $\inlatA$ is graded of rank $2$.

For part (c), observe that $\Delta_{p^e,p^e-1}$ has $p^e-1$ vertices, and for any facet $|V_{i,j}|=p^{e-j}$. Since $i$ ranges from $1$ to $p$, and $j$ ranges from $1$ to $e$, \textbf{Proposition \ref{char-poly-union}} computes that $\chi_\mathcal{A}(x)= x^{p^e-1}-(p-1)\sum^{e-1}_{j=0}x^{p^{j}}$, as desired.
\end{proof}
    
\subsection*{Proof of Theorem \ref{total-armslegs}}

There are several calculations that come up repeatedly in part (a) of the theorem. We remark that the vertices of $\Delta_{2p,2p-s}$ are all the integers between $1$ and $2p-1$, except $p$ is excluded for even $s$.

\begin{lemma}
\label{arms-legs-simplices-lemma}
Consider $\Gamma_s = \Delta_{2p,2p-s}$ for prime $p\geq{3}$ and odd $s\leq 2p$. Then the set of odd vertices is a face of $\Gamma_s$.
\end{lemma}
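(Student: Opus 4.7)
My plan is to prove the lemma by a parity argument. Let $V_{\mathrm{odd}} = \{1,3,5,\dots,2p-1\} \subset \mathbb{Z}/2p\mathbb{Z}$ be the set of odd vertices. Unpacking the definition of $\Delta_{2p,2p-s}$, to show $V_{\mathrm{odd}}$ is a face I must verify that for every choice of nonnegative integers $c_1,c_3,\dots,c_{2p-1}$ with $c_1+c_3+\cdots+c_{2p-1}=2p-s$, the sum
$$c_1\cdot 1 + c_3\cdot 3 + \cdots + c_{2p-1}\cdot(2p-1)$$
is not congruent to $0 \pmod{2p}$.

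First I would note that since $2p$ is even, parity is a well-defined invariant on $\mathbb{Z}/2p\mathbb{Z}$: every lift of a given residue class has the same parity mod $2$. In particular, being $\equiv 0\pmod{2p}$ forces being $\equiv 0\pmod 2$. Next I would compute the parity of the sum above modulo $2$: for each odd $i$, the term $c_i\cdot i$ has the same parity as $c_i$ (since $i$ is odd). Hence the total parity of the sum equals $\sum_i c_i \pmod 2 = (2p-s)\bmod 2 = s\bmod 2 = 1$, using the hypothesis that $s$ is odd. Therefore the sum is an odd element of $\mathbb{Z}/2p\mathbb{Z}$, so it cannot equal $0$.

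There is no real obstacle here: everything follows from the observation that $s$ odd forces the total weight $\ell = 2p-s$ to be odd, and a weighted sum of odd elements with odd total weight is odd. The only thing to be careful about is invoking the definition of $\ell$-zero-sumfree set with the correct coefficients (nonnegative integers summing to $\ell$, which is the case $k=0$ of $(k,\ell)$-sumfreeness).
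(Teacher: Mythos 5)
Your proof is correct and follows essentially the same route as the paper's: the paper lists $2p-s$ not-necessarily-distinct odd vertices and observes their sum is $2(\cdots)-1$, hence odd and nonzero in $\mathbb{Z}/2p\mathbb{Z}$, which is exactly your parity computation phrased with multiplicities $c_i$ instead of a repeated list. No differences worth noting.
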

\begin{proof}
Take any collection of $2p-s$ many (not necessarily distinct) odd vertices $\{2t_1 + 1,2t_2 + 1,\cdots,2t_{2p-s} + 1\}$.
Then
$$\sum_{j=1}^{2p-s}(2t_j + 1) = 2\sum_{j=1}^{2p-s}(t_j) + 2p-s = 2\left(\sum_{j=1}^{2p-s}t_j - \frac{s-1}{2}\right) - 1,$$
which is an odd element of $\mathbb{Z}/2p\mathbb{Z}$, and thus nonzero. Hence, any subset of odd vertices must be a face of $\Gamma_s$; in particular, the set of all odd vertices is a face of $\Gamma_s$.
\end{proof}

\begin{lemma}
\label{no-even-vertices-arms-legs}
Consider $\Gamma_s = \Delta_{2p,2p-s}$ for $p$ prime and $s\leq p$. Then no two even vertices are adjacent in $\Gamma_s$; moreover, if $s$ is even, then no two odd vertices are adjacent either.
\end{lemma}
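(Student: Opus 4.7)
The plan is to show that, for two distinct vertices $i,j$ of the stated type, one can exhibit nonnegative integers $c_1,c_2$ with $c_1+c_2=2p-s$ satisfying $c_1 i + c_2 j \equiv 0 \pmod{2p}$; by definition this means $\{i,j\}$ is not $(2p-s)$-zero-sumfree and hence is not a face of $\Gamma_s$.

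First I would handle the even case. Parametrize distinct even vertices as $i=2a$, $j=2b$ with $a,b \in \{1,\ldots,p-1\}$; distinctness of $i$ and $j$ modulo $2p$ is exactly the condition $a \not\equiv b \pmod{p}$. The key congruence $2(c_1 a + c_2 b) \equiv 0 \pmod{2p}$ is equivalent to $c_1 a + c_2 b \equiv 0 \pmod{p}$. Substituting $c_2 = 2p - s - c_1$ and reducing modulo $p$ turns this into the linear congruence
$$ c_1(a-b) \equiv sb \pmod{p}, $$
and since $a-b$ is a unit in $\mathbb{Z}/p\mathbb{Z}$, this determines $c_1$ uniquely modulo $p$.

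The odd case with $s$ even proceeds analogously. For distinct odd vertices $i=2a+1$, $j=2b+1$, distinctness again gives $a \not\equiv b \pmod{p}$. Expanding
$$ c_1(2a+1) + c_2(2b+1) = 2(c_1 a + c_2 b) + (2p-s), $$
I would observe that $\equiv 0 \pmod{2p}$ is equivalent to $2(c_1 a + c_2 b) \equiv s \pmod{2p}$. The hypothesis that $s$ is even is used precisely here: it lets me divide through and obtain $c_1 a + c_2 b \equiv s/2 \pmod{p}$, which upon eliminating $c_2$ becomes $c_1(a-b) \equiv (s/2)(1+2b) \pmod{p}$, again uniquely solvable for $c_1$ modulo $p$.

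The main obstacle, common to both parts, is verifying that the unique residue class determined for $c_1$ admits a representative in the required range $\{0,1,\ldots,2p-s\}$, with the complementary $c_2 = 2p-s-c_1$ also nonnegative. This is where the hypothesis $s \leq p$ is essential: it guarantees $2p-s \geq p$, so the interval $[0,2p-s]$ has length at least $p$ and therefore contains a representative of every residue class modulo $p$; that representative automatically satisfies $c_2 \geq 0$ as well. This completes the plan.
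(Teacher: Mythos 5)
Your proposal is correct and follows essentially the same route as the paper's proof: reduce the witnessing relation $c_1 i + c_2 j \equiv 0 \pmod{2p}$ to a linear congruence $c_1(a-b)\equiv \ast \pmod p$, invert $a-b$ (using distinctness of the vertices), use evenness of $s$ to divide by $2$ in the odd-vertex case, and use $s\le p$ to place a representative of the resulting residue class in $[0,2p-s]$. The only cosmetic difference is notation ($c_1,c_2$ versus the paper's single parameter $r$ and $2p-s-r$).
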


\begin{proof}
Two even vertices $v$ and $w$ are adjacent in $\Gamma_s$ if and only if the critical equivalence  
 $$r(v) + (2p-s-r)(w)\equiv{0}\bmod{2p}$$
has no solution $0\leq{r}\leq{2p-s}$. Given two distinct even vertices $v=2t_1$ and $w=2t_2$, we can manipulate the critical equivalence to $2r(t_1-t_2)\equiv 2st_2\bmod{2p}$. Since $v\not\equiv w\bmod{2p}$, we have $t_1-t_2\not\equiv{0}\bmod{p}$. Thus $t_1 - t_2$ is invertible modulo $p$, and so $r$ is a solution if and only if
    $$r\equiv{\frac{st_2}{t_1 - t_2}}\bmod{p}.$$
There is necessarily such an $r$ in the range $0\leq r<p\leq 2p-s$, and so no two even vertices are adjacent.

Now consider the case where $s$ is even, and let $\frac{s}{2} = q$. For any odd vertices $v = 2t_1 + 1$ and $w = 2t_2 + 1$, we can manipulate the critical equivalence to $2r(t_1-t_2)\equiv 2q(2t_2+1)\bmod{2p}$.
As before, $t_1-t_2$ is invertible modulo $p$, and hence $r$ is a solution if and only if
    $$r\equiv\frac{q(2t_2 + 1)}{t_1-t_2}\bmod{p}.$$
As before, there is such an $r$ in the range $0\leq{r}<p\leq{2p-s}$, and thus no two odd vertices are adjacent in this case. 
\end{proof}

\begin{lemma}
\label{arms-lemma}
Fix a prime $p$, a positive integer $s<p$, an odd element $i\in\mathbb{Z}/2p\mathbb{Z}$ and a nonzero even element $j\in\mathbb{Z}/2p\mathbb{Z}$. Then a solution exists to the critical equivalence
    $$ri + (2p-s-r)j\equiv{0}\bmod{2p}$$
if and only if $i\not\equiv{j}\bmod{p}$. In this case, all solutions satisfy 
    $$r\equiv\frac{sj}{i-j}\bmod{2p}.$$
\end{lemma}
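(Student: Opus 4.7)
My plan is to reduce the given equivalence to a single linear congruence and then analyze its solubility by invertibility arguments modulo $2p$.

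First, I would expand and simplify the critical equivalence. Rewriting $r(i) + (2p-s-r)j \equiv 0 \pmod{2p}$ and absorbing $2pj \equiv 0$, the equation becomes $r(i-j) \equiv sj \pmod{2p}$. So the problem reduces to determining when $i-j$ has a suitable inverse modulo $2p$ relative to the right-hand side.

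Next, I would use the parity hypothesis to control the $2$-part of $\gcd(i-j, 2p)$. Since $i$ is odd and $j$ is even, $i-j$ is odd, so $\gcd(i-j, 2p) = \gcd(i-j, p)$. This splits into the two cases promised by the statement. If $i \not\equiv j \pmod p$, then $\gcd(i-j, p) = 1$, hence $\gcd(i-j, 2p) = 1$; in this case $i-j$ is a unit modulo $2p$ and the unique solution is $r \equiv sj/(i-j) \pmod{2p}$, which is the formula asserted.

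The main (and only nontrivial) step is ruling out solutions in the other direction. Suppose $i \equiv j \pmod p$; I would need to show that $r(i-j) \equiv sj \pmod{2p}$ has no solution. Since $p \mid i-j$, any solution forces $p \mid sj$. But $s < p$ with $p$ prime, so $\gcd(s,p) = 1$, and the constraint degenerates to $p \mid j$. Here I would use that $j$ is nonzero and even in $\mathbb{Z}/2p\mathbb{Z}$: the only nonzero multiple of $p$ in that range is $p$ itself, which is odd, a contradiction. This excludes a solution, completing the ``only if'' direction and establishing the lemma. The subtle point to not overlook is this parity observation --- that $p$ being odd is what prevents an even element of $\mathbb{Z}/2p\mathbb{Z}$ from being divisible by $p$ --- so the assumption $p \neq 2$ from Theorem \ref{total-armslegs} is silently but essentially used.
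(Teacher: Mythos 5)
Your proposal is correct and follows essentially the same route as the paper: reduce to $r(i-j)\equiv sj \pmod{2p}$, use the oddness of $i-j$ to get $\gcd(i-j,2p)=\gcd(i-j,p)$ and hence invertibility when $i\not\equiv j\pmod p$, and in the other case derive $p\mid sj$, which fails because $s<p$ and the nonzero even $j$ cannot be a multiple of the odd prime $p$. Your handling of the non-existence case is in fact slightly cleaner than the paper's (which first argues $r$ must be even before reducing mod $p$), but the key facts invoked are identical.
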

\begin{proof}
Begin by observing that $r$ is a solution to the critical equivalence if and only if $r(i-j) \equiv{sj}\bmod{2p}$.

First, suppose that $i\not\equiv{j}\bmod{p}$. Note that since $i-j\not\equiv{0}\bmod{p}$ and is also odd, we have that $\gcd(i-j,2p) = 1$, and thus $i-j$ has a multiplicative inverse modulo $2p$. Then a solution to the critical equivalence exists, and any solution satisfies 
$r\equiv\frac{sj}{i-j}\bmod{2p}$.

Conversely, suppose that $i\equiv{j}\bmod{p}$. Then $i-j$ is odd, but $sj$ is even, and thus $r$ must be even; say $\frac{r}{2} = q$. Thus $r$ is a solution if and only if $q(i-j)\equiv{sj}\bmod{p}$. Since $i-j\equiv{0}\bmod{p}$, and since $j\not\equiv{0}\bmod{p}$ and $s\not\equiv{0}\bmod{p}$ (as $s<p$), it follows that there exists no solution to the critical equivalence when $i\equiv{j}\bmod{p}$.
\end{proof}

We are now ready to prove the theorem.

\armslegs*


\begin{proof} For part (a), we make extensive use of the lemmata above.
\\

\caseA The set of odd vertices is a face of $\Gamma_1$ by \textbf{Lemma \ref{arms-legs-simplices-lemma}}. By \textbf{Lemma \ref{no-even-vertices-arms-legs}}, we have that no even vertices are adjacent in $\Gamma_1$.

Since $\ell=n-1$, an odd vertex $i$ is adjacent to an even vertex $j$ if and only if the critical equivalence has no solution. By \textbf{Lemma \ref{arms-lemma}}, a solution to this equivalence exists if and only if $i\equiv{j}\bmod{p}$.
\\

\caseB By \textbf{Lemma \ref{no-even-vertices-arms-legs}}, no two even vertices are adjacent in $\Gamma_2$ and no two odd vertices are adjacent in $\Gamma_2$ either. By \textbf{Lemma \ref{arms-lemma}}, an odd vertex $i$ is adjacent to an even vertex $j$ if and only if $i\equiv{j}\bmod{p}$, except in the case when the only solutions to the critical equivalence are invalid. We show that this case cannot occur.

Simplifying the critical equivalence for $\Gamma_2$ yields that $i$ and $j$ are adjacent if and only if there is no solution to $r(i - j)\equiv 3j\bmod{2p}$ with $1\leq r\leq 2p-2$. Observe that if $r\equiv -1 \bmod{2p}$, then $i\equiv{-j}\bmod{2p}$. In particular, if $i$ is odd and $j$ is even, then $r\not\equiv{-1}\bmod{2p}$ is not a solution to the critical equivalence, and so by \textbf{Lemma \ref{arms-lemma}} there is a solution if and only if there is a valid solution.
\\

\caseC By \textbf{Lemma \ref{arms-legs-simplices-lemma}}, the set of odd vertices is a face of $\Gamma_3$, and by \textbf{Lemma \ref{no-even-vertices-arms-legs}}, no even vertices in $\Gamma_3$ are adjacent. By \textbf{Lemma \ref{arms-lemma}}, any odd vertex $i$ and even vertex $j$ are adjacent when $i\equiv{j}\bmod{p}$, since no solution to the critical equivalence exists. 

Now, suppose that $i\not\equiv{j}\bmod{p}.$ Then by \textbf{Lemma \ref{arms-lemma}}, it follows that a solution to the critical equivalence exists. We wish to determine for which $i$ and $j$ this solution is valid; by \textbf{Lemma \ref{arms-lemma}} we need only check that any fixed solution is equivalent to neither $-1$ nor $-2$ modulo $2p$. Simplifying the critical equivalence for $\Gamma_2$ yields $r(i - j)\equiv 3j\bmod{2p}$. So if we suppose a solution satisfies $r\equiv{-1}\bmod{2p}$, we may conclude that $i\equiv{-2j}\bmod{2p}$. This is a contradiction as $i$ is assumed to be odd. On the other hand, if we suppose a solution satisfies $r\equiv{-2}\bmod{p}$, then we conclude that $-2i\equiv{j}\bmod{2p}$. 

Thus, it follows that the vertices $i$ and $j$ are connected when $j\equiv{-2i}\bmod{2p}$, or $j\equiv{i}\bmod{p}.$ 
\\

We quickly complete the proof for $s=2$ before turning to the more interesting cases: 

\caseB From part (a) we see that $\Delta_{2p,2p-2}$ is a disjoint union of $p-1$ edges, which are $1$-simplices. So by \textbf{Proposition \ref{max-chain-union}}, $\mathcal{L}(\mathcal{A}_2)$ is graded of rank $2$, which completes the proof of part (b). Moreover, by \textbf{Corollary \ref{char-poly-union-eqdim}} we have $\chi_\mathcal{A}(x)=x^{2p-2}-px^{2}+p-1$, which completes the proof of part (c).
\\

For $s=1$ and $s=3$, we know from part (a) that the set $F_0$ of $(p-1)$ odd vertices is a facet of $\Delta_{2p,2p-s}$ for $s\in\{1,3\}$, and all other facets (including for $s=2$) are of the form $F=\{f_1,f_2\}$ where $f_2$ is even and $f_1$ is an odd number distinct from $p$. In particular, for any $F\neq F_0$, we have $F\cap F_0 = \{f_1\}$. We denote the corresponding subspaces of $\mathcal A_s$ by $S_{F_0}$, $S_F$ and $S_{f_1}$, which have dimensions $p-1$, $2,$ and $1$ respectively. To resolve part (b), we observe:
\\

\caseA From part (a) we see there are $p$ facets, and all the two-element facets are disjoint. It is easily checked that every intersection of two subspace of $\mathcal A_1$ is either $\hat 1$ or $S_{f_1}=S_{F_0}\cap S_F$ for some unique $F$. Moreover, the intersection of any three subspaces of $\mathcal A_1$ is $\hat 1$. Thus any maximal chain has the form $\{\hat 0=\mathbb{K}^{2p-1}\leq S_F \leq S_{f_1} \leq \{0\}=\hat 1\}$.
\\

\caseC From part (a) we see there are there are $2p-1$ facets. In addition to the intersections above, we also have that for each even $v\in V,$ there exists a unique pair of facets $F,G$, neither of which are $F_0$, such that $F\cap G=\{v\}$. We denote these by $S_{v}$; they have dimension $1$. It is easily checked that the intersection of any two subspaces in $\mathcal A_3$ is either $\hat 1$ or $S_v$ for some $v\in V$, the intersection of any three subspaces is either $\hat 1$ or $S_{f_1}$ for some odd $f_1\in V$ (recall, in particular, that $p\notin V$), and the intersection of any four subspace is $\hat 1$. Thus, any maximal chain has the form $\{\hat 0=\mathbb{K}^{2p-1}\leq S_F \leq S_{v} \leq \{0\}=\hat 1\}$.
\\

Hence, $\mathcal L(\mathcal A_1)$ and $\mathcal L(\mathcal A_1)$ are each graded of rank $3$, completing the proof of part (b). Proceeding to part (c), we note that $\mu(\hat{0},\hat{0})=1$, and $\mu(\hat{0},S_F)=\mu(\hat{0},S_{F_0})=-1$ for each facet $F$ of dimension $1$, for both $\mathcal L(\mathcal A_1)$ and $\mathcal L(\mathcal A_3)$. Moreover:
\\

\caseA For $s=1$, each $S_{f_1}$ is contained in $S_{F_0}$ and exactly one $S_F$. So, $\mu(\hat{0},S_{f_1})=-(1-2)=1$. Finally, we have $\mu(\hat{0},\hat{1})=-(1+-p+(p-1))=0$. Thus, by definition of the characteristic polynomial, $\chi_{\mathcal{A}_1}(x)=x^{2p-1}-x^p-(p-1)x^2+(p-1)x$. 
\\

\caseC For $s=3$ each $S_v$ is contained in exactly two $S_F$, and if $v$ is odd it is also contained in $S_{F_0}$. So, $\mu(\hat{0},S_{v})$ is $1$ if $v$ is even, and $2$ if $v$ is odd. Finally, we have $\mu(\hat{0},\hat{1})=-(1+-(2p-1)+(p-1)+2(p-1))=-(p-1)$. Therefore, the characteristic polynomial is $\chi_{\mathcal{A}_3}(x)=x^{2p-1}-x^p-2(p-1)x^2+3(p-1)x-(p-1)$

\end{proof}

\section*{Acknowledgments}
 
The authors are deeply grateful for the mentorship of Kaisa Taipale. We would also like to thank Ryan Matzke and Vic Reiner for many enlightening conversations, and the University of Minnesota for facilitating our collaboration. This work was partially supported by NSF RTG grant DMS-1745638.

\bibliographystyle{plain}
\bibliography{AHS-arxiv-draft} 

\begin{thebibliography}{1}

\bibitem{Athanasiadis}
Christos~A. Athanasiadis.
\newblock Characteristic polynomials of subspace arrangements and finite
  fields.
\newblock {\em Advances in Mathematics}, 122(2):193--233, 1996.

\bibitem{menu-research-problems}
B\'ela Bajnok.
\newblock {\em Additive Combinatorics}.
\newblock Chapman and Hall/CRC, 2018.

\bibitem{BajnokMatzke}
B\'ela Bajnok and Ryan Matzke.
\newblock The maximum size of $(k,l)$-sum-free sets in cyclic groups.
\newblock {\em Bulletin of the Australian Mathematical Society}, pages
  184--194, 2019.

\bibitem{BierChin}
T.~Bier and A.~Y.~M. Chin.
\newblock On $(k, l)$-sets in cyclic groups of odd prime order.
\newblock {\em Bulletin of the Australian Mathematical Society},
  63(1):115--121, 2001.

\bibitem{CalkinThomson}
Neil~J Calkin and Jan~McDonald Thomson.
\newblock Counting generalized sum-free sets.
\newblock {\em Journal of Number Theory}, 68(2):151--159, 1998.

\bibitem{CameronErdos}
P.~Cameron and P.~Erd\H{o}s.
\newblock On the number of sets of integers with various properties.
\newblock {\em Number Theory: Proceedings of the 1988 Canadian Number Theory
  Conference at Banff}, pages 61--80, 01 1990.

\bibitem{Green}
Ben Green.
\newblock The {C}ameron-{E}rd\h{o}s conjecture.
\newblock {\em Bulletin of the London Mathematical Society}, 36(6):769--778,
  2004.

\bibitem{HamidounePlagne}
Yahya Hamidoune and Alain Plagne.
\newblock A new critical pair theorem applied to sum-free sets in abelian
  groups.
\newblock {\em Commentarii Mathematici Helvetici}, 79(1):183--207, Jan 2004.

\bibitem{Sapozhenko}
Alexander~A. Sapozhenko.
\newblock The {C}ameron-{E}rd\h{o}s conjecture.
\newblock {\em Discrete Mathematics}, 308(19):4361--4369, 2008.
\newblock Simonovits '06.

\end{thebibliography}

\end{document}